\definecolor{darkblue}{rgb}{0.13,0.13,0.39}
\newtheorem{thm}{Theorem}
\newtheorem{lem}{Lemma}[section]
\newtheorem{prop}[lem]{Proposition}
\theoremstyle{definition}
\newtheorem{rem}[lem]{Remark}
\newtheorem*{rem*}{Remark}
\newcounter{assum}
\newcommand{\pp}{\mathbb{P}}
\newcommand{\ee}{\mathbb{E}}
\newcommand{\rr}{\mathbb{R}}
\newcommand{\zz}{\mathbb{Z}}
\newcommand{\aip}{\mathcal{A}_2}
\newcommand{\ct}{\mathcal{T}}
\newcommand{\cm}{\mathcal{M}}
\newcommand{\p}{\partial}
\newcommand{\uno}[1]{\mathbf{1}_{#1}}
\newcommand{\ep}{\varepsilon}
\newcommand{\vs}{\vspace{6pt}}
\newcommand{\K}{K_{\Ai}}
\newcommand{\qand}{\quad\text{and}\quad}
\newcommand{\qqand}{\qquad\text{and}\qquad}
\DeclareMathOperator{\Ai}{Ai}
\DeclareMathOperator{\tr}{tr}
\newcommand{\gref}[1]{\ref*{g-#1} of \cite{cqr}}
\newcommand{\geqref}[1]{(\ref*{g-#1}) in \cite{cqr}}
\newcommand{\grefn}[1]{\ref*{g-#1}}
\newcommand{\otref}[1]{\ref*{ot-#1} of \cite{qr-airy1to2}}
\newcommand{\oteqref}[1]{(\ref*{ot-#1}) in \cite{qr-airy1to2}}
\DeclareMathOperator*{\argmax}{arg\,max}
\numberwithin{equation}{section}
\let\oldmarginpar\marginpar
\renewcommand\marginpar[1]{\-\oldmarginpar[\raggedleft\footnotesize #1]%
{\raggedright{\small\textsf{#1}}}}
\begin{document}

\title{Tails of the endpoint distribution of directed polymers}

\author{Jeremy Quastel}
\address[J.~Quastel]{
  Department of Mathematics\\
  University of Toronto\\
  40 St. George Street\\
  Toronto, Ontario\\
  Canada M5S 2E4} \email{quastel@math.toronto.edu}
\author{Daniel Remenik}
\address[D.~Remenik]{
  Department of Mathematics\\
  University of Toronto\\
  40 St. George Street\\
  Toronto, Ontario\\
  Canada M5S 2E4 \newline \indent\textup{and}\indent
  Departamento de Ingenier\'ia Matem\'atica\\
  Universidad de Chile\\
  Av. Blanco Encala\-da 2120\\
  Santiago\\
  Chile} \email{dremenik@math.toronto.edu}

\maketitle

\begin{abstract}
  We prove that the random variable $\ct=\argmax_{t\in\rr}\{\aip(t)-t^2\}$, where $\aip$
  is the Airy$_2$ process, has tails which decay like $e^{-ct^3}$. The distribution of
  $\ct$ is a universal distribution which governs the rescaled endpoint of directed
  polymers in $1+1$ dimensions for large time or temperature.
\end{abstract}

\section{Introduction}

Consider the following model of a \emph{directed polymer in a random
  environment}. A \emph{polymer path} is a nearest-neighbor random walk path
$\pi=(\pi_0,\pi_1,\dots)$ in $\zz$ started at the origin, that is,
$\pi_0=0$ and $\pi_k-\pi_{k-1}=\pm1$. On $\zz_+\times\zz$ we place a collection of
independent random weights $\big\{\omega_{i,j}\big\}_{i\geq0,j\in\zz}$. The
\emph{weight} of a polymer path segment $\pi$ of length $N$ is defined as
\[W_N(\pi)=e^{\beta\sum_{k=0}^N\omega_{i,\pi_i}}\] for some fixed $\beta>0$ which is known
  as the \emph{inverse temperature}. If we restrict our attention to paths
of length $N$ which go from the origin to some given $x\in\zz$
then we talk about a \emph{point-to-point polymer}, defined through the path measure
\[Q^{\rm point}_{N,x}(\pi)=\frac{1}{Z^{\rm point}(N,x)}W_{N}(\pi)\] for $\pi$ of length
$N$ going from the origin to $x$ and $Q^{\rm point}_{N,x}(\pi)=0$ otherwise. The
normalizing constant $Z^{\rm point}(N,x)=\sum_{\pi:\,\pi(0)=0,\,\pi(N)=x}W_{N}(\pi)$ is known as the
\emph{point-to-point partition function}. Similarly, if we consider all possible paths of
length $N$ then we talk about a \emph{point-to-line polymer}, defined through the path
measure
\[Q^{\rm line}_N(\pi)=\frac{1}{Z^{\rm line}(N)}W_{N}(\pi)\] for $\pi$ of length $N$ and
$Q^{\rm line}_N(\pi)=0$ otherwise, with the \emph{point-to-line partition function}
$Z^{\rm line}(N)=\sum_{k=-N}^NZ^{\rm point}(N,k)$.

Our main interest will be the point-to-line case. The main quantities of interest in
this case are the partition function and the position of the endpoint of the randomly
chosen path, which we will denote by $\ct_N$. It is widely believed that these
quantities should satisfy the scalings
\begin{equation}
  \log(Z^{\rm line}(N))\sim aN+bN^{1/3}\chi,\label{eq:ZNGOE}\qqand
  \ct_N\sim N^{2/3}\ct,
\end{equation}
where the constants $a$ and $b$ may depend on the distribution of the $\omega_{i,j}$ and
$\beta$, but $\chi$ and $\ct$ should be universal (up to some moment assumptions on the
$\omega_{i,j}$'s).

While there are few results available in the general case described above, the
zero-temperature limit $\beta\to\infty$, known as
\emph{last passage percolation}, is very well understood, at least for some
specific choices of the environment variables $\omega_{i,j}$. We will restrict the
discussion to \emph{geometric last passage percolation}, where one considers a family
$\big\{\omega_{i,j}\}_{i\in\zz^+,j\in\zz}$ of independent geometric random variables with
parameter $q$ (i.e. $\pp(\omega_{i,j}=k)=q(1-q)^{k}$ for $k\geq0$) and
defines the \emph{point-to-point last passage time} by
\[L(N,y)=\max_{\pi:\,\pi(0)=0,\pi(N)=y}\sum_{i=0}^N\omega_{i,\pi(i)}\]
and the \emph{point-to-line last passage time} by
\[L(N)=\max_{y=-N,\dots,N}L(N,y).\]
We remark that this model is usually defined on $(\zz^+)^2$, which corresponds to rotating
our picture by 45 degrees and working on the dual lattice. Although the exact results we
will describe next have been proved for that case, the picture in our situation is morally
the same, and hence for simplicity we present the results for last passage percolation on
$\zz_+\times\zz$.

We define the rescaled process $t\mapsto H_N(t)$ by linearly interpolating the values
given by scaling $L(N,y)$ through the relation
\[L(N,y)=c_1N+c_2N^{1/3}H_N(c_3N^{-2/3}y),\] where the constants $c_i$ have explicit
expressions which depend only on $q$ and can be found in \cite{johansson}. The
point-to-line rescaled process is then given by
\[G(N)=\sup_{t\in[-c_3N^{1/3},c_3N^{1/3}]}H_N(t),\]
and it is known in this case \cite{baikRains} that
\begin{equation}
 G(N)\sim aN+bN^{1/3}\chi\label{eq:LPPGOE}
\end{equation}
with $\chi$ having the Tracy-Widom largest eigenvalue distribution for the Gaussian
Orthogonal Ensemble (GOE) from random matrix theory \cite{tracyWidom2} (the analogous
result holds in the point-to-point case with $\chi$ now having the Tracy-Widom largest
eigenvalue distribution for the Gaussian Unitary Ensemble (GUE) \cite{tracyWidom}). On the
other hand, \citet{johansson} showed that
\begin{equation}
  H_N(t) \to \aip(t)-t^2
\end{equation}
in distribution as $N\to\infty$, in the topology of uniform convergence on compact
sets. Here $\aip$ is the Airy${}_2$ process, which we describe below, and which is a
universal limiting spatial fluctuation process in such models. As a consequence of
Johansson's result (see also \cite{cqr}), \eqref{eq:LPPGOE} translates into
\begin{equation}
  \pp\Big(\sup_{t\in\rr}\big(\aip(t)-t^2\big)\leq m\Big)=F_{\rm GOE}(4^{1/3}m)\label{eq:johGOE}
\end{equation}
(the $4^{1/3}$ arises from scaling considerations, or alternatively from the direct proof
given in \cite{cqr}).

The Airy$_2$ process was introduced by \citet{prahoferSpohn}, and is defined through its
finite-dimensional distributions, which are given by a Fredholm determinant formula: given
$x_0,\dots,x_n\in\mathbb{R}$ and $t_0<\dots<t_n$ in $\mathbb{R}$,
\begin{equation}\label{eq:detform}
\mathbb{P}\!\left(\aip(t_0)\le x_0,\dots,\aip(t_n)\le x_n\right) =
\det(I-\mathrm{f}^{1/2}K_{\mathrm{ext}}\mathrm{f}^{1/2})_{L^2(\{t_0,\dots,t_n\}\times\mathbb{R})},
\end{equation}
where we have counting measure on $\{t_0,\dots,t_n\}$ and
Lebesgue measure on $\mathbb{R}$,  $\mathrm f$ is defined on
$\{t_0,\dots,t_n\}\times\mathbb{R}$ by
$\mathrm{f}(t_j,x)=\uno{x\in(x_j,\infty)}$,
and the {\it extended Airy kernel} \cite{prahoferSpohn,FNH,macedo} is
defined by
\[K_\mathrm{ext}(t,\xi;t',\xi')=
\begin{cases}
\int_0^\infty d\lambda\,e^{-\lambda(t-t')}\Ai(\xi+\lambda)\Ai(\xi'+\lambda), &\text{if $t\ge t'$}\\
-\int_{-\infty}^0 d\lambda\,e^{-\lambda(t-t')}\Ai(\xi+\lambda)\Ai(\xi'+\lambda),  &\text{if $t<t'$},
\end{cases}\] where $\Ai(\cdot)$ is the Airy function. In particular, the one point
distribution of $\aip$ is given by the Tracy-Widom GUE distribution. An alternative
formula for $\aip$ due to \cite{prahoferSpohn}, which is the starting of the proofs given
in \cite{cqr} of \eqref{eq:johGOE} and \eqref{eq:basic} below, and also of the main result
of this paper, is given by
\begin{multline}
  \label{eq:airyfd}
    \pp\!\left(\aip(t_0)\leq x_0,\dotsc,\aip(t_n)\leq x_n\right)\\
  =\det\!\left(I-K_{\Ai}+\bar P_{x_0}e^{(t_0-t_1)H}\bar P_{x_1}e^{(t_1-t_2)H}\dotsm
   \bar P_{x_n}e^{(t_n-t_0)H}K_{\Ai}\right),
\end{multline}
where $K_{\Ai}$ is the \emph{Airy kernel}
\[K_{\Ai}(x,y)=\int_{-\infty}^0 d\lambda\Ai(x-\lambda)\Ai(y-\lambda),\] $H$ is the
\emph{Airy Hamiltonian} $H=-\p_x^2+x$ and $\bar P_a$ denotes the projection onto the
interval $(-\infty,a]$. Here, and in everything that follows, the determinant means the
Fredholm determinant on the Hilbert space $L^2(\rr)$, unless a different Hilbert
space is indicated in the subscript (the last formula \eqref{eq:airyfd} should be compared with
\eqref{eq:detform}, where the Fredholm determinant is computed in an extended space). The
equivalence of \eqref{eq:detform} and \eqref{eq:airyfd} was derived in
\cite{prahoferSpohn} and \cite{prolhacSpohn}. We refer the reader to
\cite{cqr,quastelRemAiry1} for more details.

Coming back to geometric last passage percolation, we turn to the random variables
\[\ct_N=\inf\big\{t\!:\sup_{s\leq t}H_N(s)=\sup_{s\in\rr}H_N(s)\big\},\]
which correspond to the location of the endpoint of the maximizing path with unconstrained
endpoint (that is, the zero-temperature point-to-line polymer). From the above discussion
one expects the following:

\begin{thm}\label{thm:endpointCvgce}
  Let $\ct = \argmax_{t\in \mathbb{R}} \{\aip(t)-t^2\}$. Then, as $N\to\infty$,
  $\ct_N\to\ct$ in distribution.
\end{thm}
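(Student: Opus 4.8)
The plan is to deduce the convergence $\ct_N\to\ct$ from Johansson's functional limit theorem $H_N(\cdot)\Rightarrow\aip(\cdot)-(\cdot)^2$ (convergence in distribution, uniformly on compact sets) together with continuity of the $\argmax$ functional, the only substantial point being a tightness bound guaranteeing that $\ct_N$ does not run off to infinity. Throughout, ``$\argmax$'' is read as the leftmost maximizer, consistently with the definition of $\ct_N$. Fix $M>0$ and let $\Psi_M\colon C([-M,M])\to[-M,M]$ be the functional $\Psi_M(g)=\min\{t\in[-M,M]\colon g(t)=\max_{[-M,M]}g\}$; it is Borel measurable and continuous at every $g$ with a unique maximizer on $[-M,M]$. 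Johansson's theorem gives $H_N|_{[-M,M]}\Rightarrow(\aip-(\cdot)^2)|_{[-M,M]}$ in $C([-M,M])$, and the limit a.s.\ has a unique maximizer: uniqueness of $\argmax_{t\in\rr}\{\aip(t)-t^2\}$ is known (see \cite{mqr}), and it also follows from the local absolute continuity of $\aip$ with respect to Brownian motion on compact intervals, which reduces it to the elementary fact that a Brownian motion minus a parabola a.s.\ has a unique maximizer. Hence, by the continuous mapping theorem, $\ct_N^M:=\Psi_M(H_N|_{[-M,M]})\Rightarrow\ct^M:=\Psi_M((\aip-(\cdot)^2)|_{[-M,M]})$ as $N\to\infty$, for each fixed $M$.

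The next step is to match $\ct_N^M$ with $\ct_N$. On the event $\{|\ct_N|<M\}$ the global maximum of $H_N$ over its domain $[-c_3N^{1/3},c_3N^{1/3}]$ is attained inside $(-M,M)$, so $\ct_N$ (the leftmost global maximizer) equals $\Psi_M(H_N|_{[-M,M]})$; thus $\{\ct_N\neq\ct_N^M\}\subseteq\{|\ct_N|\ge M\}$, and likewise $\{\ct\neq\ct^M\}\subseteq\{|\ct|\ge M\}$. Moreover $\ct$ is a.s.\ finite, since \eqref{eq:johGOE} implies $\sup_{t}(\aip(t)-t^2)<\infty$ a.s.\ and $\aip(t)-t^2\to-\infty$ a.s.\ as $|t|\to\infty$. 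Consequently, for bounded Lipschitz $\phi$,
\[
\bigl|\ee\phi(\ct_N)-\ee\phi(\ct)\bigr|\ \le\ \bigl|\ee\phi(\ct_N^M)-\ee\phi(\ct^M)\bigr|\ +\ 2\|\phi\|_\infty\bigl(\pp(|\ct_N|\ge M)+\pp(|\ct|\ge M)\bigr),
\]
and letting first $N\to\infty$ (by the previous paragraph) and then $M\to\infty$, the theorem reduces to the tightness statement $\lim_{M\to\infty}\limsup_{N\to\infty}\pp(|\ct_N|\ge M)=0$.

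This tightness bound is the crux, and the main obstacle. On $\{|\ct_N|\ge M\}$ the global maximum of $H_N$ is attained at some $|t|\ge M$, so $\sup_{M\le|t|\le c_3N^{1/3}}H_N(t)\ge H_N(0)$; since $H_N(0)\Rightarrow\aip(0)$ (Tracy--Widom GUE), it suffices to show that $\pp\bigl(\sup_{M\le|t|\le c_3N^{1/3}}H_N(t)\ge-\tfrac12M^2\bigr)$ is small uniformly in $N$ for $M$ large. I would obtain this from the curvature of the last-passage limit shape together with a uniform-in-$N$ upper-tail estimate for $\sup_{|t|\le R}(H_N(t)+t^2)$ of moderate-deviation type (probability $\lesssim C(R)e^{-cs^{3/2}}$ of exceeding level $s$, with at most polynomial $R$-dependence) --- precisely the kind of point-to-point last-passage estimate that already underlies the proof of \eqref{eq:johGOE} in \cite{cqr}, which we would invoke. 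A crude union bound over lattice points fails, since there are $\Theta(N^{2/3})$ of them per unit $t$-interval, so instead I would decompose $\{M\le|t|\le c_3N^{1/3}\}$ into dyadic blocks $\{2^kM\le|t|\le2^{k+1}M\}$, on each of which $-t^2\le-(2^kM)^2$, and bound $\pp\bigl(\sup_{2^kM\le|t|\le 2^{k+1}M}(H_N(t)+t^2)\ge\tfrac12(2^kM)^2\bigr)$ by $C(2^{k+1}M)\,e^{-c(2^{2k}M^2)^{3/2}}$; summing over $k\ge0$ gives a bound uniform in $N$ that tends to $0$ as $M\to\infty$ (note the resulting decay $\pp(|\ct_N|\ge M)\lesssim e^{-cM^3}$ matches the $e^{-ct^3}$ tails of $\ct$ itself). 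Combining this with $\pp(H_N(0)<-\tfrac12M^2)\to\pp(\aip(0)<-\tfrac12M^2)\to0$ yields the tightness, and assembling the three steps completes the proof. The soft ingredients (the continuous-mapping argument, uniqueness of the limiting argmax, finiteness of $\ct$) are routine; the last-passage tail estimate in the final step is where the real work lies.
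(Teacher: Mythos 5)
Your proof is correct and follows the same route the paper relies on: the paper itself does not re-prove this theorem, but simply cites \cite{johansson} (who established it modulo a.s.\ uniqueness of $\argmax_t\{\aip(t)-t^2\}$) together with the uniqueness results of \cite{corwinHammond} and \cite{mqr}; your write-up is a faithful reconstruction of Johansson's argument --- continuous mapping for the leftmost-argmax functional on compacts, followed by a dyadic moderate-deviation tightness bound ruling out that $\ct_N$ escapes to infinity --- so it is the same approach, spelled out. One small attribution slip: the uniform-in-$N$ tail estimates for $\sup_{|t|\le R}(H_N(t)+t^2)$ that drive your tightness step do not ``underlie the proof of \eqref{eq:johGOE} in \cite{cqr}'' (that proof is an operator-theoretic computation with the \emph{limiting} Airy$_2$ process via \eqref{eq:basic} and involves no prelimit last-passage bounds); the estimates you need are exactly the ones Johansson proves and uses for this very tightness step, so the correct reference for that ingredient is \cite{johansson}.
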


This result was proved by \citet{johansson} under the additional hypothesis that the
supremum of $\aip(t)-t^2$ is attained at a unique point. The uniqueness was proved, using
two different methods, by \citet{corwinHammond} and by Moreno Flores and us \cite{mqr}.

Although the result of Theorem \ref{thm:endpointCvgce} has only been proved in the case of
geometric (or exponential) last passage percolation, the key point is that the {\it
  polymer endpoint distribution} is expected to be {\it universal} for directed polymers
in random environments in $1+1$ dimensions, and even more broadly in the KPZ universality
class, for example in particle models such as asymmetric attractive interacting particle
systems (e.g. the asymmetric exclusion process), where second class particles play the
role of polymer paths. This problem has received quite a bit of recent interest in the
physics literature, see \cite{mqr} and references therein for more details.

In \cite{mqr} we obtained an explicit expression for the distribution of $\ct$. More
precisely, we obtained an explicit expression for the joint density of
\[\ct = \argmax_{t\in \mathbb{R}}
\{\aip(t)-t^2\}\qqand\cm=\max_{t\in \mathbb{R}} \{\aip(t)-t^2\},\] which we will denote as
$f(t,m)$.  To state the formula we need some definitions. Let $B_m$ be the integral
operator with kernel
\begin{equation}
  \label{eq:Bc}
  B_m(x,y)=\Ai(x+y+m).
\end{equation}
Recall that \citet{ferrariSpohn} showed that $F_\mathrm{GOE}$ can be expressed as the
determinant
\begin{equation}
  F_\mathrm{GOE}(m)=\det(I-P_0B_mP_0),\label{eq:GOE}
\end{equation}
  where $P_a$ denotes the projection onto the interval $[a,\infty)$ (the formula
  essentially goes back to \cite{sasamoto}). In particular, note that since
  $F_\mathrm{GOE}(m)>0$ for all $m\in\rr$, \eqref{eq:GOE} implies that $I-P_0B_mP_0$ is
  invertible. For $t,m\in\rr$ define the function
\begin{equation}
  \label{eq:phi}
  \psi_{t,m}(x)=2e^{xt}\left[t\Ai(x+m+t^2)+\Ai'(x+m+t^2)\right]
\end{equation}
and the kernel
\[\Psi_{t,m}(x,y)=2^{1/3}\psi_{t,m}(2^{1/3}x)\psi_{-t,m}(2^{1/3}y).\]
Then the joint density of $\ct$ and $\cm$ is given by
\begin{equation}
  \label{eq:ftm}
  \begin{split}
      f(t,m)&=\det\!\big(I-P_0B_{4^{1/3}m}P_0+P_0\Psi_{t,m}P_0\big)-F_\mathrm{GOE}(4^{1/3}m)\\
      &=\tr\!\big[(I-P_0B_{4^{1/3}m}P_0\big)^{-1}P_0\Psi_{t,m}P_0\big]F_\mathrm{GOE}(4^{1/3}m).      
    \end{split}
\end{equation}
Integrating over $m$ one obtains a formula for the probability density $f_{\rm
  end}(t)$ of $\ct$, although it does not appear that the resulting integral can be
computed explicitly. One can readily check nevertheless that $f_{\rm end}(t)$ is symmetric in $t$.
Figure \ref{fig:density}, taken from \cite{mqr}, shows a plot of the marginal $\ct$ density.

The goal of this paper is to study the decay of the tails of $\ct$. We will prove:

\begin{thm}\label{thm:tail}
 There is a $c>0$ such that for every $\kappa>\frac{32}3$ and large enough $t$,
 \[e^{-\kappa t^3}\leq\pp\big(|\ct|>t\big)\leq
 ce^{-\frac43t^3+2t^2+\mathcal{O}(t^{3/2})}.\]
\end{thm}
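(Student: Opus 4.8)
\emph{Since $f_{\rm end}$ is even, $\pp(|\ct|>t)=2\pp(\ct>t)$, so it suffices to sandwich $\pp(\ct>t)$ for large $t$.} For the \textbf{upper bound}, I would first discard a negligible event: if $\ct>t$ then $\aip(s)-s^2>\aip(0)$ for some $s>t$, and the GUE lower tail gives $\pp(\aip(0)<-(\log t)^2)\le e^{-c(\log t)^6}=o(e^{-\frac43 t^3})$, so $\pp(\ct>t)\le\pp\big(\sup_{s>t}(\aip(s)-s^2)>-(\log t)^2\big)+o(e^{-\frac43 t^3})$. Discretizing $(t,\infty)$ over unit intervals and union-bounding, on $(t+k,t+k+1)$ one has $s^2\ge(t+k)^2$, so by stationarity of the Airy${}_2$ process the $k$-th term is at most $\pp\big(\sup_{[0,1]}\aip>(t+k)^2-(\log t)^2\big)$; these decay geometrically with ratio $e^{-\Theta(t^2)}$, so the sum is comparable to its $k=0$ term. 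It then remains to bound $\pp(\sup_{[0,1]}\aip>r)$ at height $r\approx t^2$, and it is here that the $+2t^2$ enters: combining the one-point upper tail $\pp(\aip(s)>r)\le Ce^{-\frac43 r^{3/2}}$ with a short-range modulus-of-continuity estimate for $\aip$ (via its local absolute continuity with respect to Brownian motion) yields a bound of the form $\pp(\sup_{[0,1]}\aip>r)\le Ce^{-\frac43 r^{3/2}+2r+\mathcal O(r^{3/4})}$, and plugging in $r\approx t^2$ gives $e^{-\frac43 t^3+2t^2}$, the $-(\log t)^2$ shift and the geometric sum contributing only $\mathcal O(t^{3/2})$. (An alternative is to integrate the joint density \eqref{eq:ftm}: Airy asymptotics give $\|P_0\psi_{\pm s,m}(2^{1/3}\cdot)\|_2$ explicitly, one bounds $\|(I-P_0B_{4^{1/3}m}P_0)^{-1}\|$ using that its Fredholm determinant equals $F_{\rm GOE}(4^{1/3}m)$, and a Laplace analysis in $m$ and $s$ gives the same estimate — with care needed for $m\to-\infty$, where the resolvent blows up.)

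For the \textbf{lower bound}, fix $\kappa>\frac{32}3$; I would exhibit an event $E$ with $\pp(E)\ge e^{-\kappa t^3}$ on which $\ct>t$, exploiting that if $\aip$ is of order $(2t)^2=4t^2$ across the window $[t,2t]$ then the $\argmax$ of $\aip(s)-s^2$ lies to the right of $t$. Concretely, take
\[E=\big\{\aip(s)\ge s^2+1\ \ \forall s\in[t+1,2t]\big\}\cap\big\{\aip(s)-s^2<1\ \ \forall s\le t\big\},\]
so that on $E$ we have $\sup_{s\le t}(\aip(s)-s^2)<1\le\aip(2t)-(2t)^2$, hence $\ct>t$. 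To bound $\pp(E)$ below: the one-point \emph{lower} tail of $\aip$ together with the Brownian--Gibbs structure of the Airy line ensemble (which lets one keep the top curve above a prescribed height-$\sim4t^2$ profile on an interval of length $\sim t$ at a cost comparable to the single-point cost) gives $\pp(\aip(s)\ge s^2+1\ \forall s\in[t+1,2t])\ge e^{-\frac43(4t^2)^{3/2}+o(t^3)}=e^{-\frac{32}3 t^3+o(t^3)}$; the forced increase of $\aip$ by $\approx2t$ across $[t,t+1]$ costs only $e^{-\mathcal O(t^2)}$; and, conditionally on $\aip|_{[t,\infty)}$, the event $\{\aip(s)-s^2<1\ \forall s\le t\}$ still has probability bounded below (up to the mild influence of the far conditioning it is a fresh Airy event, of probability close to $F_{\rm GOE}(4^{1/3})>0$). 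Multiplying, $\pp(E)\ge e^{-\frac{32}3 t^3-\mathcal O(t^2)}\ge e^{-\kappa t^3}$ for $t$ large. (Alternatively one can lower-bound $f$ directly: near $(s,m)=(2t,m_0)$ with $m_0$ a fixed constant, $(I-P_0B_{4^{1/3}m_0}P_0)^{-1}$ and $F_{\rm GOE}(4^{1/3}m_0)$ are bounded below, and the Airy asymptotics at $s\approx2t$ give $f(s,m_0)\gtrsim e^{-\frac43(4t^2)^{3/2}+o(t^3)}$, whence $\pp(\ct>t)\ge\int_{2t}^{2t+1}\!\!\int_{m_0}^{m_0+1}f\gtrsim e^{-\frac{32}3 t^3+o(t^3)}$.)

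The \textbf{main obstacle} is the upper bound. A crude union bound over $(t,\infty)$ yields only an $e^{\mathcal O(t^{5/2})}$ correction, so one needs a supremum estimate for $\aip$ on a unit interval that is sharp to within $e^{o(t^{3/2})}$ of the one-point bound at heights of order $t^2$ — i.e.\ a quantitatively good modulus of continuity, responsible for the $2t^2$. If one works instead through \eqref{eq:ftm}, the matching difficulty is to control $(I-P_0B_{4^{1/3}m}P_0)^{-1}$ uniformly in $m$, in particular for large negative $m$ where the Cauchy--Schwarz bound degenerates, so that the $m$-integral converges with the stated exponent. The lower bound is softer but still requires a quantitative decoupling between the far conditioning $\{\aip|_{[t,2t]}\ \text{large}\}$ and the law of $\aip$ on $(-\infty,t]$.
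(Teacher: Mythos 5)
Your upper bound sketch is in the same spirit as the paper's second proof (Proposition~\ref{prop:supfin} and the discretization that follows it, which only yields the worse $\mathcal{O}(t^2)$ correction): discretize, invoke stationarity, and reduce to a supremum estimate for $\aip$ on a compact interval. However the crucial ingredient, the claim
\[\pp\bigl(\sup_{[0,1]}\aip>r\bigr)\leq Ce^{-\frac43r^{3/2}+2r+\mathcal{O}(r^{3/4})},\]
is asserted, not proved. Absolute continuity with respect to Brownian motion gives no effective bound at heights $r\approx t^2$; the paper's Proposition~\ref{prop:supfin} actually proves the stronger estimate (no $+2r$ term at all), but by comparing the Fredholm determinant for the continuum statistics on $[-L,L]$ with the one for $F_{\rm GOE}$ via the trace-norm estimate of Lemma~\ref{lem:tracenormbd}, machinery you have not supplied. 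Your parenthetical alternative (integrate \eqref{eq:ftm}) is closer to the paper's first proof, which is the one that really produces $-\frac43t^3+2t^2+\mathcal{O}(t^{3/2})$; but the paper handles the $m\to-\infty$ blowup you flag not by bounding $(I-P_0B_{4^{1/3}m}P_0)^{-1}$ but by first splitting off $\pp(\cm\leq-2t)$ via the GOE tail and then using the determinant form of \eqref{eq:ftm} together with \eqref{eq:tracecont}, which needs only trace norms of $P_0B_{4^{1/3}m}P_0$ and $P_0\Psi_{s,m}P_0$, never the resolvent norm.

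The lower bound is where the proposal genuinely fails, and the approach is quite different from the paper's. Your event $E$ requires $\aip(s)\geq s^2+1$ on all of $[t+1,2t]$ and simultaneously $\aip(s)-s^2<1$ on $(-\infty,t]$, and you assert that the second constraint, conditionally on the first, costs only $e^{-\mathcal{O}(t^2)}$ plus a constant. That is not so: conditioning $\aip$ to hug the parabola on $[t+1,2t]$ forces $\aip(t+1)\approx(t+1)^2$, and the bump created by a height-$\sim t^2$ conditioning has width $\sim t$, so the conditional profile of $\aip(s)$ sits far above $s^2+1$ throughout an $\mathcal{O}(t)$-length interval to the left of $t$. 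Pushing $\aip$ back below $s^2+1$ there costs an additional $e^{-ct^3}$, and $\pp(E)$ has a strictly larger exponent than $\tfrac{32}{3}$. The Gaussian toy model already exhibits this: with $X(s)=B(s)-s^2$, $\pp(X(2t)\geq1)\approx e^{-4t^3}$, but given $B(2t)=4t^2$ the bridge value $B(t)$ is $N(2t^2,t/2)$, so $\pp(X(t)<1\mid X(2t)=4t^2)\approx e^{-t^3}$ and the joint probability is $e^{-5t^3}$, not $e^{-4t^3+o(t^3)}$. The paper avoids exactly this by putting the barrier for $x\leq t$ at level $\beta t^2$ with $\beta>3$ rather than at level $\mathcal{O}(1)$ (see \eqref{eq:firstbdlowpre}--\eqref{eq:firstbdlow}), so the left event stays of constant probability even conditionally; the resulting two-point probability is then handled analytically, not via Brownian--Gibbs: Lemma~\ref{lem:1to2} gives a Fredholm determinant, Proposition~\ref{prop:matrixkernel} turns it into a $2\times2$ matrix kernel, and Lemma~\ref{lem:prtbd} establishes the approximate factorization through Widom's device and the trace-norm bounds of Lemma~\ref{lem:kernelEst}. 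The constraint $\beta>3$ there (i.e.\ $\sigma\geq4$) is a technical artifact of those trace-norm estimates and is precisely what produces the threshold $\tfrac{32}{3}=\tfrac43\cdot4^{3/2}$; it has nothing to do with a choice of window $[t+1,2t]$.
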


We believe that the correct exponent is the $-\frac43$ obtained in the upper bound (we
remark that we have not attempted to get better estimates on the lower order terms in the
upper bound). The tail decay of order $e^{-ct^3}$ confirms a prediction made in the
physics literature in \citet{halpZhang}, see also \citet{mezardParisi}. Their idea is to
argue by analogy with the argmax of Brownian motion minus a parabola. In that case one has
a complete analytical solution \cite{groeneboom}.

We will give two proofs of the upper bound, both in Section \ref{sec:upper}. The first one
is based on a direct application of the formula \eqref{eq:ftm} for the joint
density of $\ct$ and $\cm$. The second proof will start from a probabilistic argument and
then use the continuum statistics formula for the Airy$_2$ process obtained in \cite{cqr}
to estimate the probability that the maximum is attained very far from the origin. This
formula corresponds to the continuum limit of \eqref{eq:airyfd} and is given as follows
(see \cite{cqr} for more details). Fix $\ell<r$ and $g\in H^1([\ell,r])$ and define an
operator $\Theta^g_{[\ell,r]}$ acting on $L^2(\rr)$ by
$\Theta^g_{[\ell,r]}f(\cdot)=u(r,\cdot)$, where $u(r,\cdot)$ is the solution at time $r$
of the boundary value problem
\begin{equation}
  \begin{aligned}
    \p_tu+Hu&=0\quad\text{for }x<g(t), \,\,t\in (\ell,r)\\
    u(\ell,x)&=f(x)\uno{x<g(\ell)}\\
    u(t,x)&=0\quad\text{for }x\ge g(t).
  \end{aligned}\label{eq:bdval}
\end{equation}
Then
\begin{equation}\pp\!\left(\aip(t)\leq g(t)\text{ for
      }t\in[\ell,r]\right)=\det\!\left(I-K_{\Ai}+\Theta^g_{[\ell,r]}  e^{(r-\ell)H}K_{\Ai}\right).\label{eq:basic}
\end{equation}
We remark that in the second proof actually get an upper bound with a larger $\mathcal{O}(t^2)$
correction in the exponent.

Not surprisingly, the lower bound turns out to be more difficult (in fact, for the upper
bound we can basically use the estimate
$|\hspace{-0.05em}\det(I+A)-\det(I+B)|\leq\|A-B\|_1e^{1+\|A\|_1+\|B\|_1}$ for trace class operators $A$
and $B$ directly to estimate the decay by computing the trace norm of two operators; no
such estimate is available for the lower bound). In this case we will have to use a
probabilistic argument to extract the lower bound from the well-known exact asymptotics
for the tails of the GUE distribution, and then show that the remaining terms are of lower
order. For this last task we will use again \eqref{eq:basic}, but the argument is much
more complicated than for the upper bound. Interestingly, it will involve turning an
instance of \eqref{eq:basic} which mixes continuum and discrete statistics for $\aip$ back
into an extended kernel formula.

\begin{rem}\mbox{}
  \begin{enumerate}[label=\arabic*.]
  \item A few days before submitting this article, we became aware of the very recent work
    of \citet{schehr}, where he obtains, using non-rigorous arguments, an alternative
    formula for the joint distribution function of $\cm$ and $\ct$. His formula is
    obtained by taking the limit in $N$ of a known formula for the joint distribution of
    the maximum and location of the maximum for the top line of $N$ non-intersecting
    Brownian excursions, which is expected to converge to the Airy$_2$ process. The
    resulting formula is expressed in terms of quantities associated to the
    Hastings-McLeod solution of the Painlev\'e II equation, and has tails decaying like
    $e^{-\frac43t^3}$.
  \item During the refereeing process, \citet{baikLiechtySchehr} proved the equivalence of
    the formula of \cite{schehr} and \eqref{eq:ftm}.  Hence the rigorous validity of the
    formula of \cite{schehr} is established based on \cite{mqr}, as well as the tail
    decay.
  \end{enumerate}
\end{rem}

\vs
\paragraph{\bf Acknowledgements}  The authors would like to thank the referee for a careful
reading of the article.
Both authors were supported by the Natural Science and Engineering Research Council of
Canada, and DR was supported by a Fields-Ontario Postdoctoral Fellowship.

\section{Upper bound}
\label{sec:upper}

Throughout the paper $c$ and $C$ will denote positive constants whose values may change
from line to line. We will denote by $\|\cdot\|_{\rm op}$, $\|\cdot\|_1$ and $\|\cdot\|_2$
respectively the operator, trace class and Hilbert-Schmidt norms of operators on
$L^2(\rr)$ (see Section \gref{sec:aiL} for the definitions or \cite{simon} for a complete
treatment). We will use the following facts repeatedly (they can all be found in
\cite{simon}): if $A$ and $B$ are bounded linear operators on $L^2(\rr)$, then
\begin{equation}\label{eq:norms}
  \begin{gathered}
    \|AB\|_1\leq\|A\|_2\|B\|_2,\qquad\|AB\|_2\leq\|A\|_{\rm
      op}\|B\|_2,\qquad\|AB\|_2\leq\|A\|_2\|B\|_{\rm op},\\
    \|A\|_{\rm op}\leq\|A\|_2\leq\|A\|_1,\\
    \|A\|_2^2=\int_{\rr^2}dx\,dy\,A(x,y)^2,
  \end{gathered}
\end{equation}
where in the last one we are assuming that $A$ has integral kernel $A(x,y)$. We will also
use the bound
 \begin{equation}
    \label{eq:tracecont}
    \left|\det(I+A)-\det(I+B)\right|\leq\|A-B\|_1e^{\|A\|_1+\|B\|_1+1}\leq\|A-B\|_1e^{\|A-B\|_1+2\|B\|_1+1}
\end{equation}
for any two trace class operators $A$ and $B$.

We recall that the shifted Airy functions $\phi_\lambda(x)=\Ai(x-\lambda)$ are the
generalized eigenfunctions of the Airy Hamiltonian, as
$H\phi_\lambda=\lambda\phi_\lambda$, and the Airy kernel $K_{\Ai}$ is the projection of
$H$ onto its negative generalized eigenspace (see Remark \gref{airyrem}). This implies
that $e^{sH}\K$ has integral kernel
\begin{equation}
  \label{eq:eLHK}
  e^{sH}\K(x,y)=\int_0^{\infty}d\lambda\,e^{-s\lambda}\Ai(x+\lambda)\Ai(y+\lambda).
\end{equation}
It also implies that $e^{aH}\K e^{bH}\K=e^{(a+b)H}\K$. We will use this fact several times
in this and the next section.

\subsection{First proof}

We start by writing
\[\pp(\ct>t)\leq\pp(\ct>t,\cm>-2t)+\pp(\cm\leq-2t).\]
By \eqref{eq:johGOE} the second probability on the right side equals $F_{\rm
  GOE}(-2^{5/3}t)\leq ce^{-\frac43t^3}$, where the tail bound can be found in
\cite{baikBuckDiF}. Thus it will be enough to prove that
\begin{equation}
  \label{eq:ctcmbd}
  \pp(\ct>t,\cm>-2t)\leq ce^{-\frac43t^3+2t^2+\mathcal{O}(t^{3/2})}.
\end{equation}

Let $s\geq t$. We will assume for the rest of the proof that $m>-2t$.  Using
\eqref{eq:GOE} and the first formula in \eqref{eq:ftm} we get from \eqref{eq:tracecont}
that
\begin{equation}
f(s,m)\leq\|P_0\Psi_{s,m}P_0\|_1e^{1+2\|P_0B_{4^{1/3}m}P_0\|_1+\|P_0\Psi_{s,m}P_0\|_1}.\label{eq:ftmbd}
\end{equation}
Using the identity
\begin{equation}
  \label{eq:airyConv}
  \int_{-\infty}^\infty du\Ai(a+u)\!\Ai(b-u)=2^{-1/3}\Ai(2^{-1/3}(a+b))
\end{equation}
and letting $\ep=t^{-1}$ we may write
\begin{equation}
  \begin{aligned}
    P_0B_{4^{1/3}m}P_0=2^{1/3}Q_1Q_2\qquad\text{with}\quad
    Q_1(x,\lambda)&=\uno{x\geq0}\Ai(2^{1/3}x+m+\lambda)e^{\frac12\ep\lambda},\\
    Q_2(\lambda,y)&=e^{-\frac12\ep\lambda}\Ai(2^{1/3}y+m-\lambda)\uno{y\geq0}.
  \end{aligned}\label{eq:A11A12}
\end{equation}
Lemma \ref{lem:Qs} now gives
\begin{equation}
  \|P_0B_{4^{1/3}m}P_0\|_1\leq ct^{3/2}.\label{eq:Bmbd}
\end{equation}

On the other hand, recall that the trace norm of an operator $\Psi$ acting on $L^2(\rr)$
is defined as
\[\|\Psi\|_1=\sum_{n=1}^{\infty}\langle e_n,|\Psi|e_n\rangle,\] where $\{e_n\}_{n\geq 1}$
is any orthonormal basis of $L^2(\rr)$ and $|\Psi|=\sqrt{\Psi^*\Psi}$ is the unique
positive square root of the operator $\Psi^*\Psi$. For the case $\Psi=P_0\Psi_{s,m}P_0$,
since $\Psi$ is a rank one operator it is easy to check that $\Psi^*\Psi$ has only one
eigenvector, and in fact it is given by $\uno{x\geq0}\psi_{-s,m}(2^{1/3} x)$ with
associated eigenvalue
$\lambda_{s,m}=2^{1/3}\|P_0\psi_{s,m}(2^{1/3}\cdot)\|^2_2\|P_0\psi_{-s,m}(2^{1/3}\cdot)\|^2_2$. We
deduce that $\|P_0\Psi_{s,m}P_0\|_1=\sqrt{\lambda_{s,m}}$, and then by \eqref{eq:ftmbd}
and \eqref{eq:Bmbd} we get
\begin{equation}
  \int_{-2t}^\infty\,dm f(s,m)\leq \int_{-2t}^\infty dm\,\sqrt{\lambda_{s,m}}e^{1+ct^{3/2}+\sqrt{\lambda_{s,m}}}.\label{eq:intminus2t}
\end{equation}
Now Lemma \ref{lem:psi2bd} gives
\begin{align}
  \int_{-2t}^\infty dm\,\sqrt{\lambda_{s,m}}&=2^{1/6}\int_{-2t}^\infty
  dm\,\|P_0\psi_{s,m}(2^{1/3}\cdot)\|_2\|P_0\psi_{-s,m}(2^{1/3}\cdot)\|_2\\
  &\leq2^{1/6}\left[\int_{-2t}^\infty
    dm\,\|P_0\psi_{s,m}(2^{1/3}\cdot)\|^2_2\right]^{1/2}\left[\int_{-2t}^\infty
    dm\,\|P_0\psi_{-s,m}(2^{1/3}\cdot)\|^2_2\right]^{1/2}\\
  &\leq ce^{-\frac43s^3+2st,}
\end{align}
and it is not hard to see from the proof of Lemma \ref{lem:psi2bd} that $\lambda_{s,m}$ is
bounded uniformly for $m\geq-2t$, $s>t$ and large enough $t$. We deduce then from
\eqref{eq:intminus2t} that
$\int_{-2t}^\infty dm\,f(s,m)\leq ce^{-\frac43s^3+2st+\mathcal{O}(t^{3/2})}$, and hence
\begin{align}
  \pp(\ct>t,\,\cm>-2t)&=\int_t^\infty ds\int_{-2t}^\infty dm\,f(s,m)\leq c\int_t^\infty
  ds\,e^{-\frac43s^3+2st+\mathcal{O}(t^2)}\\&\leq ce^{-\frac43t^3+2t^2+\mathcal{O}(t^{3/2})},
\end{align}
where the last estimate can be easily obtained from an application of Laplace's method,
see the proof of Lemma \ref{lem:airyInt} for a similar estimate. This gives
\eqref{eq:ctcmbd} and the upper bound of Theorem \ref{thm:tail}.

\begin{figure}
  \centering \hspace{-0.0in}\includegraphics[width=4in]{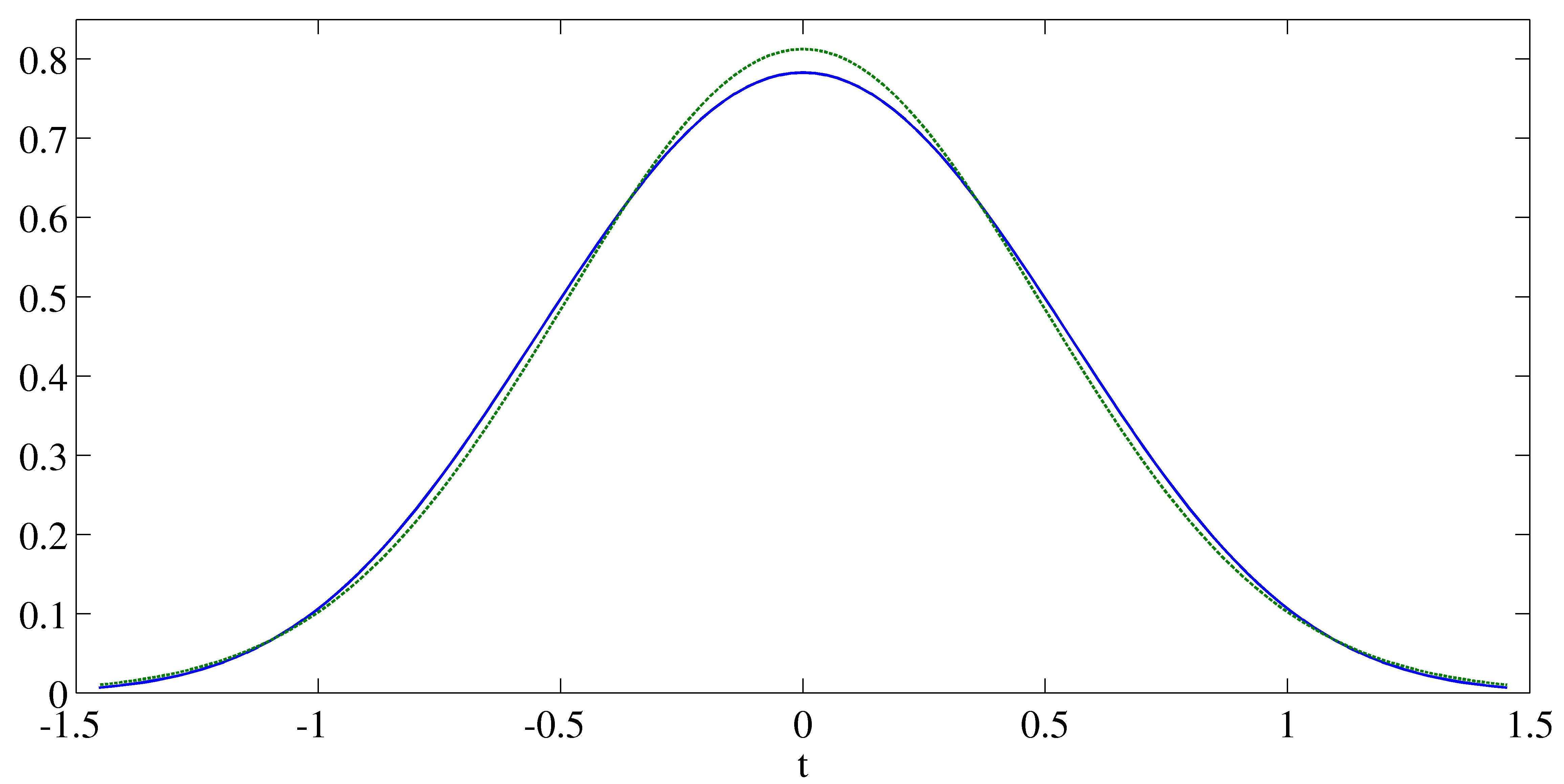} \vspace{-14pt}
  \caption{Plot of the density of $\ct$ compared with a Gaussian density with the same
    variance 0.2409 (dashed line). The excess kurtosis $\ee(\ct^4)/\ee(\ct^2)^2-3$ is
    $-0.2374$.}
  \label{fig:density}
\end{figure}

\subsection{Second proof}

Since we already have a full proof of the upper bound, we will skip some details.
The key result for this proof is the following

\begin{prop}\label{prop:supfin}
  Fix $L\geq1$. Then there is a $c>0$ such that for every $m>0$
  \[\pp\!\left(\sup_{x\in[-L,L]}\aip(x)>m+1\right)\leq c\,e^{-\frac43m^{3/2}}.\]
\end{prop}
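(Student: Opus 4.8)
The plan is to control the supremum of $\aip$ over a compact interval by combining a union bound over finitely many points with a modulus-of-continuity estimate, and then to feed into this the known Tracy--Widom GUE right-tail bound for the one-point marginal of $\aip$. First I would recall that $\aip(x)$ has the GUE Tracy--Widom distribution for each fixed $x$, and that its right tail satisfies $\pp(\aip(x)>a)\leq c\,e^{-\frac43 a^{3/2}}$ for large $a$ (the same estimate from \cite{baikBuckDiF} that was already invoked for $F_{\rm GOE}$). Since we only care about $m>0$ and the bound $m+1$ in the statement, we have a genuine margin of $+1$ between the threshold we must exceed and the level $m$ at which the one-point tail is evaluated; this margin is what will absorb the fluctuations coming from the continuity estimate.

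The key steps, in order: (1) partition $[-L,L]$ into $\mathcal{O}(L/\delta)$ intervals of length $\delta$ with endpoints $x_1,\dots,x_K$; (2) on the event $\{\sup_{[-L,L]}\aip>m+1\}$, either $\aip(x_i)>m$ for some grid point $x_i$, or the supremum is achieved at some $x$ with $\aip(x)>m+1$ while $\aip(x_i)\leq m$ at the nearest grid point, forcing the oscillation of $\aip$ over an interval of length $\delta$ to exceed $1$; (3) bound the first alternative by $K\cdot c\,e^{-\frac43 m^{3/2}}$ via the union bound and the one-point tail; (4) bound the second alternative using a quantitative modulus-of-continuity / tail estimate for $\aip$ — for instance, $\aip$ restricted to a compact set is locally Hölder-$\frac12^-$ and, more to the point, one has a tail bound of the form $\pp(|\aip(x)-\aip(y)|>u)\leq C e^{-c u^2/|x-y|^{1/2}}$ or an analogous quantitative control (available from the known estimates on the Airy process and its relation to Brownian motion, cf. \cite{corwinHammond}). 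Choosing $\delta$ to be a small fixed constant (it need not go to zero — $L$ is fixed), the number of grid points $K$ is a constant, and the oscillation-exceeds-$1$ probability over each little interval is bounded by a constant, independent of $m$; hence we can simply take $\delta$ small enough that this second contribution is itself of order $e^{-\frac43 m^{3/2}}$ or smaller, or absorb it into the constant $c$ since it does not even grow with $m$. Collecting, $\pp(\sup_{[-L,L]}\aip>m+1)\leq K c\,e^{-\frac43 m^{3/2}} + (\text{const})$, and since we only need the statement for the exponential rate $e^{-\frac43 m^{3/2}}$ with a possibly large constant $c$, a little care is needed to ensure the second term genuinely decays — so in fact I would want a modulus estimate whose failure probability also decays in $m$.

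The main obstacle is precisely this last point: making the oscillation term decay like $e^{-\frac43 m^{3/2}}$ rather than merely being a constant. One clean way around it is to use a single-level bound instead of a union of two-point increments: replace the grid argument by the observation that $\{\sup_{[-L,L]}\aip>m+1\}\subseteq\bigcup_{i}\{\aip(x_i)>m\}\cup\{\text{osc}>1\}$ and then note the oscillation event, for $\aip$ on a fixed compact set, has a tail in the level that is irrelevant — so instead one should pick the grid spacing $\delta=\delta(m)$ shrinking polynomially in $m$ (say $\delta\sim m^{-1}$), so that $K\sim L m$ grows only polynomially, the union bound still gives $m\cdot c\,e^{-\frac43 m^{3/2}}=c\,e^{-\frac43 m^{3/2}+\mathcal{O}(\log m)}$, and the oscillation-over-length-$\delta$-exceeds-$1$ event has probability at most $C e^{-c/\delta^{1/2}} = Ce^{-c m^{1/2}}$ per interval, hence at most $CLm\,e^{-cm^{1/2}}$ in total — which is still too big. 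So the genuinely careful route is: use that $\aip(x)-x^2$ plus a large-deviation control shows $\aip$ cannot be large far out, but here $L$ is fixed so that does not help directly; the honest fix is to invoke a Hölder modulus bound strong enough that $\pp(\text{osc over }[-L,L]\text{ at scale }\delta>1)$ can be made smaller than $e^{-\frac43 m^{3/2}}$ by taking $\delta$ a sufficiently small constant, using that the Airy process increments have Gaussian-type tails with a variance proportional to $\delta^{1/2}$, so the oscillation bound is roughly $C\delta^{-1}e^{-c/\delta^{1/2}}$, which is a \emph{constant} — and then one simply notes that since $m>0$, $e^{-\frac43 m^{3/2}}$ is bounded below away from $0$ only on bounded $m$, so after enlarging $c$ to handle all $m$ in any fixed bounded range, it suffices to treat large $m$, where $Kc e^{-\frac43 m^{3/2}}$ dominates and the constant oscillation term is absorbed by further enlarging $c$. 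In other words the constant-in-$m$ oscillation term is harmless precisely because the claimed bound has a free multiplicative constant and we may restrict to large $m$. I expect writing this cleanly — choosing the scale, quoting the right increment tail bound, and handling the small-$m$ versus large-$m$ dichotomy — to be the only real work; everything else is a routine union bound against the GUE tail.
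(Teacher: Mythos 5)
Your union-bound-plus-modulus-of-continuity strategy is a genuinely different route from the paper's, but as written it has a fatal gap which your own last paragraph gestures at and then dismisses incorrectly. The decomposition
\[
\Bigl\{\sup_{[-L,L]}\aip>m+1\Bigr\}\subseteq\bigcup_i\{\aip(x_i)>m\}\cup\{\textup{osc at scale }\delta>1\}
\]
yields, via the union bound, a sum (grid term) + (oscillation term). For a \emph{fixed} grid spacing $\delta$ and a \emph{fixed} oscillation threshold $1$, the second term is a constant $C_0>0$ independent of $m$. A constant cannot be absorbed into $c\,e^{-\frac43m^{3/2}}$: for any finite $c$ one eventually has $c\,e^{-\frac43m^{3/2}}<C_0$, so the claimed bound fails for all large $m$. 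Your final sentence — that ``the constant oscillation term is absorbed by further enlarging $c$'' — is simply false. You correctly note that shrinking $\delta$ with $m$ while keeping the threshold at $1$ does not help either (the polynomial in $\delta^{-1}$ beats $e^{-c/\delta^{1/2}}$ unless $\delta$ shrinks very fast, and then the grid-term prefactor $K\sim\delta^{-1}$ blows up). The honest version of this scheme requires a moving threshold, writing $\{\sup>m+1\}\subseteq\{\max_i\aip(x_i)>m+1-\ep\}\cup\{\textup{osc}>\ep\}$ and balancing $\ep=\ep(m)$, $\delta=\delta(m)$; but then the grid term costs $e^{-\frac43(m-\ep)^{3/2}}\approx e^{-\frac43m^{3/2}+2\ep m^{1/2}}$, and pushing the oscillation term down to the target rate forces $\delta$ so small that the resulting polynomial prefactor in $m$ ruins the exact constant $\frac43$. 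So this is a real obstruction, not a bookkeeping one, and your proposal does not contain the idea needed to close it.

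The paper's proof is operator-theoretic and avoids chaining entirely: it writes $\pp\bigl(\sup_{[-L,L]}\aip\leq m+1\bigr)\geq\pp\bigl(\sup_{[-L,L]}(\aip(x)-x^2)\leq m\bigr)$, expresses the right-hand side as a Fredholm determinant via the continuum statistics formula \eqref{eq:basic}, compares that determinant to the one for $F_{\rm GOE}(4^{1/3}m)$ from \cite{cqr} using the trace-norm estimate \eqref{eq:tracecont}, bounds the trace norm of the difference by $c\,e^{-\eta m^{3/2}}$ with $\eta>\tfrac43$ (Lemma \ref{lem:tracenormbd}), and finishes with the GOE tail asymptotics from \cite{baikBuckDiF}. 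You should note, though, that a purely probabilistic one-liner in the spirit you were reaching for does exist and does not need any modulus-of-continuity input: on $[-1,1]$, $\aip(x)>m+1$ forces $\aip(x)-x^2>m$, hence $\sup_{\rr}(\aip(y)-y^2)>m$; therefore
\[
\pp\Bigl(\sup_{x\in[-1,1]}\aip(x)>m+1\Bigr)\leq\pp(\cm>m)=1-F_{\rm GOE}(4^{1/3}m)\leq c\,e^{-\frac43m^{3/2}},
\]
using \eqref{eq:johGOE} and the same tail bound for $F_{\rm GOE}$. This is the comparison to the GOE \emph{global} maximum, rather than to the GUE one-point marginal, that your grid argument was missing: it gets the $\frac43$ constant for free because $F_{\rm GOE}$ already encodes the supremum information, whereas a union bound against one-point marginals inevitably produces error terms that you then have to fight.
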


\begin{proof}
  By \eqref{eq:basic} we have, writing $g_m(s)=s^2+m$,
  \begin{equation}
    \begin{split}
      \pp\!\left(\sup_{x\in[-L,L]}\aip(x)\leq m+1\right)
      &\geq\pp\!\left(\sup_{x\in[-L,L]}\big(\aip(x)-x^2\big)\leq m\right)\\
      &=\det\!\left(I-K_{\Ai}+e^{LH}K_{\Ai}\Theta^{g_{m}}_{[-L,L]}e^{LH}K_{\Ai}\right),
    \end{split}\label{eq:supfin1}
  \end{equation}
  where we have used the cyclic property of the determinant together with the identity
  $e^{2LH}\K=e^{LH}\K e^{LH}\K$ (see the remark after \eqref{eq:eLHK}). Now recall from
  Theorem \gref{thm:goe} that
  \[F_{\rm GOE}(4^{1/3}m)=\det\!\big(I-e^{LH}K_{\Ai}R_Le^{LH}K_{\Ai}\big),\] where $R_L$ is
  defined in \geqref{eq:RL}.  Therefore by \eqref{eq:tracecont} we deduce that
  \begin{equation}\label{eq:GOEbd}
    \det\!\left(I-K_{\Ai}+e^{LH}K_{\Ai}\Theta^{g_m}_{[-L,L]}e^{LH}K_{\Ai}\right)
    \geq F_{\rm GOE}(4^{1/3}m)-\|A-B\|_1e^{1+\|A-B\|_1+2\|B\|_1},
  \end{equation}
  where
  \[A=K_{\Ai}-e^{LH}K_{\Ai}\Theta^{g_m}_{[-L,L]}e^{LH}K_{\Ai}\qqand
  B=e^{LH}K_{\Ai}R_Le^{LH}K_{\Ai}\] and we have used the triangle inequality in the
  exponent.  Now $\|B\|_1$ can easily be bounded by some constant uniformly in $m>0$ by an
  argument similar to the one used to obtain \eqref{eq:Bmbd}. On the other hand, using the
  decomposition of $\Theta^{g_m}_{[-L,L]}$ given in \geqref{eq:diff1} we have
  \[A-B=e^{LH}K_{\Ai}\Omega_Le^{LH}K_{\Ai},\] where $\Omega_L=\big(R_L-\bar
  P_{m+L^2}R_L\bar P_{m+L^2}\big) -\big(e^{-2LH}-\bar P_{m+L^2}e^{-2LH}\bar
  P_{m+L^2}\big)$. By Lemma \ref{lem:tracenormbd} we get $\|A-B\|_1\leq c\,e^{-\eta
    m^{3/2}}$ for some $\eta>\frac43$, and then using this and \eqref{eq:GOEbd} in
  \eqref{eq:supfin1} we obtain
  \[\pp\!\left(\sup_{x\in[-1,1]}\aip(x)\leq m+1\right)
  \geq F_{\rm GOE}(4^{1/3}m)-c\,e^{-\eta m^{3/2}}.\] The result follows from this and
  the asymptotics \cite{baikBuckDiF} $F_{\rm GOE}(4^{1/3}m)\geq1-cm^{-3/2}e^{-\frac43m^{3/2}}$.
\end{proof}

Using Proposition \ref{prop:supfin} we can derive the upper bound. Start by observing that, for fixed
$\sigma\in(0,1)$,
  \begin{equation}
    \begin{split}
      \pp\!\left(\ct\in[s,s+2]\right)&\leq\pp\!\left(\sup_{x\in[s,s+2]}\big(\aip(x)-x^2\big)>\aip(0)\right)
      \leq\pp\!\left(\sup_{x\in[s,s+2]}\aip(x)>\aip(0)+s^2\right)\\
      &\leq\pp\!\left(\sup_{x\in[s,s+2]}\aip(x)>(1-\sigma)s^2\right)
      +\pp\!\left(\aip(0)<-\sigma s^2\right).
    \end{split}
    \label{eq:suptt}
  \end{equation}
  The last probability equals $F_{\rm GUE}(-\sigma s^2)$, and then the asymptotics obtained
  in \cite{baikBuckDiF} give $\pp\!\left(\aip(0)<-\sigma s^2\right)\leq
  c\,e^{-\frac1{12}\sigma^{3}s^6}$.  For the other term on the right side of
  \eqref{eq:suptt} we use Proposition \ref{prop:supfin} and the stationarity of the
  Airy$_2$ process to write, for $s\geq t$,
  \[\pp\!\left(\sup_{x\in[s,s+2]}\aip(x)>(1-\sigma)s^2\right)\leq
  c\,e^{-\frac43[(1-\sigma)s^2-1]^{3/2}}.\] Therefore
  \[\pp\!\left(\ct\in[s,s+2]\right)\leq
  c\,e^{-\min\{\frac1{12}\sigma^{3}s^6,\frac43[(1-\sigma)s^2-1]^{3/2}\}}.\] 
  Take $\sigma=4^{2/3}s^{-1}$ so that the minimum in the above exponent equals
  $\frac43[s^2-4^{2/3}s-1]^{3/2}=\frac43s^3+\mathcal{O}(s^2)$ for large enough $s$. We deduce that
  \[\pp\!\left(\ct\in[s,s+2]\right)\leq c\,e^{-\frac43s^3+\mathcal{O}(s^{2})}.\]
  Summing this inequality over intervals of the form $[t+2k,t+2(k+1)]$ for $k\geq0$ gives
  \[\pp\big(\ct>t\big)\leq c\,e^{-\frac43t^3+\mathcal{O}(t^{2})}\]
  for some $c>0$ and large enough $t$, and now the upper bound in Theorem \ref{thm:tail}
  (with a worse $\mathcal{O}(t^2)$ correction) follows from the symmetry of $\ct$.

\section{Lower bound}
\label{sec:lowerbd}

As we mentioned, the lower bound turns out to be more
delicate, because in this case a simple bound like \eqref{eq:tracecont} is not
available. The main idea of the proof is to compare the probability we are interested in
with an expression involving the one-dimensional marginal of the Airy$_2$ process, and then
extract the lower bound from known asymptotics for the Tracy-Widom GUE distribution. This
comparison will introduce an error term, and most of the work in the proof will be to show
that this error term is of lower order.

The first step in the comparison is to write, for $t>0$, $\beta\geq0$ and $s>0$,
\begin{equation}
  \pp(|\ct|>t)\geq\pp\!\left(\aip(x)-x^2\leq\beta t^2\,\,\forall\,x\leq
    t,\,\aip(t+s)-(t+s)^2>\beta t^2\right).\label{eq:firstbdlowpre}
\end{equation}
The idea is the following.  If $s$ is now taken to be reasonably large, then $\aip(t+s)$
and $\aip(x)$, $x\leq t$, should have decorrelated somewhat. Assuming they have completely
decorrelated, we would have that the right side is bounded below by
$\pp\!\left(\aip(t+s)-(t+s)^2>\beta t^2\right)$, which has the correct decay if we choose
$s=\alpha t$. So the whole proof comes down to estimating the correction coming from the
correlation.

Of course, from \eqref{eq:firstbdlowpre} we have
\begin{multline}
  \pp(|\ct|>t)
  =\pp\!\left(\aip(x)-x^2\leq\beta t^2\,\,\forall\,x\leq t\right)\\
  -\pp\!\left(\aip(x)-x^2\leq\beta t^2\,\,\forall\,x\leq t,\,\aip(t+s)-(t+s)^2\leq\beta t^2\right),\label{eq:firstbdlow}
\end{multline}
The bound will then follow from the following

\begin{lem}\label{lem:prtbd}
  Let $\beta>3$. There is an $\alpha_0>0$ (which depends on $\beta$) such that if
  $\alpha\in(0,\alpha_0)$ and $s=\alpha t$, then for large enough $t$ we have
  \begin{multline}
    \pp\!\left(\aip(x)-x^2\leq\beta t^2\,\,\forall\,x\leq t,\,\aip(t+s)-(t+s)^2\leq\beta t^2\right)\\
    \leq\pp\!\left(\aip(x)-x^2\leq\beta t^2\,\,\forall\,x\leq
      t\right)\pp\!\left(\aip(t+s)-(t+s)^2\leq\beta t^2\right)\\
    \cdot\big[1+\tfrac12a_2(t+s)^{-3}e^{-\frac43(\beta+1)^{3/2}(t+s)^3}\big],
  \end{multline}
  where $a_2$ is defined implicitly in \eqref{eq:bdGUE}.
\end{lem}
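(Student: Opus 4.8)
The plan is to write the joint probability on the left side as a single Fredholm-determinant expression that mixes the continuum statistics of $\aip$ on $(-\infty,t]$ with the one-point marginal at $t+s$, and then to compare it with the product of the two individual probabilities by bounding the ``coupling'' term. Concretely, starting from \eqref{eq:basic} applied on an interval $[\ell,t]$ with $\ell\to-\infty$ and $g(x)=x^2+\beta t^2$, together with the extended-kernel formula \eqref{eq:airyfd} used to append the single time $t+s$, one gets an identity of the schematic form
\[
\pp\big(\aip(x)-x^2\le\beta t^2\ \forall x\le t,\ \aip(t+s)-(t+s)^2\le\beta t^2\big)
=\det\!\big(I-\K+\Xi+\bar P_{v}e^{sH}\Upsilon e^{sH}K_{\Ai}\bar P_{v}\cdots\big),
\]
where $v=(t+s)^2+\beta t^2$, $\Xi$ is the operator coming from the continuum part on $(-\infty,t]$, and the extra factor propagates the kernel from time $t$ to time $t+s$ and back. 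The point is that if one \emph{drops} the mixed propagation term, the determinant factors as the product $\pp(\aip(x)-x^2\le\beta t^2\ \forall x\le t)\cdot F_{\rm GUE}((t+s)^2+\beta t^2)$, and $F_{\rm GUE}((t+s)^2+\beta t^2)=\pp(\aip(t+s)-(t+s)^2\le\beta t^2)$ is exactly the second factor in the statement.

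Having isolated the mixed term, I would estimate it using the inequality \eqref{eq:tracecont}, writing the left side as $\det(I+A)$ and the product as $\det(I+B)$, with $A-B$ equal to (a conjugate of) the propagation operator. The key estimate is a trace-norm bound on $A-B$. Since the relevant kernels are built from $e^{sH}\K$, whose kernel \eqref{eq:eLHK} is $\int_0^\infty d\lambda\,e^{-s\lambda}\Ai(x+\lambda)\Ai(y+\lambda)$, and since all the spatial variables are cut off below the large height $v\sim(t+s)^2$, shifting variables by $v$ and using the standard Airy decay $\Ai(\xi)\le Ce^{-\frac23\xi^{3/2}}$ for $\xi\ge0$ produces a factor $e^{-\frac43 v^{3/2}}$ up to polynomial corrections; the explicit constant and the $a_2(t+s)^{-3}$ prefactor should come out by matching against the precise GUE tail asymptotics \eqref{eq:bdGUE} (that is why $a_2$ appears in the statement, and why the bound is phrased multiplicatively as $1+\tfrac12 a_2(t+s)^{-3}e^{-\frac43(\beta+1)^{3/2}(t+s)^3}$ rather than with a generic constant). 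The exponent $\frac43(\beta+1)^{3/2}(t+s)^3$ is consistent with $v^{3/2}$ when $s=\alpha t$ with $\alpha$ small and $t$ large, since then $v=(t+s)^2+\beta t^2=(t+s)^2(1+\beta t^2/(t+s)^2)$ and $\beta t^2/(t+s)^2\to\beta$ as $\alpha\to0$, giving $v^{3/2}\sim(\beta+1)^{3/2}(t+s)^3$; the condition $\beta>3$ and the smallness of $\alpha_0(\beta)$ are precisely what is needed to absorb the error terms (both the polynomial corrections and the fact that $v\ge(\beta+1)(t+s)^2$ only in the limit) into the stated bound for large $t$.

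The main obstacle, I expect, is the bookkeeping in the first paragraph: turning \eqref{eq:basic} on a half-line, which is a continuum-statistics formula, into an extended-kernel formula to which one more discrete time can be cleanly appended, and doing so in a way that makes the factorization-after-dropping-the-mixed-term transparent. This is exactly the step the introduction flags (``turning an instance of \eqref{eq:basic} which mixes continuum and discrete statistics for $\aip$ back into an extended kernel formula''). One has to be careful that the operator $\Theta^{g}_{[\ell,t]}$ from \eqref{eq:bdval}, composed with the heat semigroup $e^{sH}$ and the projection $\bar P_v$, really does decouple into a self-contained block plus a small correction; checking that the ``diagonal'' block reproduces $F_{\rm GUE}$ exactly (not just up to error) requires using the semigroup identity $e^{aH}\K e^{bH}\K=e^{(a+b)H}\K$ and the fact that $\K$ projects onto the negative eigenspace of $H$. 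Once the algebraic identity is in hand, the analytic estimate is a routine, if somewhat delicate, Airy-tail computation of the same flavor as the one behind \eqref{eq:Bmbd} and Lemma \ref{lem:psi2bd}, and Laplace's method handles the final cleanup.
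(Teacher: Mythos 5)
Your proposal lands on the right landmarks (a mixed determinant formula, comparison with a product, trace-norm control, Airy decay) but there are two genuine gaps, one structural and one analytic.

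\textbf{The factorization does not come for free.} You write the joint probability as a single Fredholm determinant on $L^2(\rr)$ of the form $\det(I-\K+\Xi+\text{mixed term})$ and then assert that dropping the mixed term causes the determinant to factor as the product of the two marginal probabilities. For a single scalar Fredholm determinant, $\det(I-A-B)$ does not factor into $\det(I-A)\det(I-B)$, and the formula from Lemma~\ref{lem:1to2}, namely
\[
\det\!\left(I-\K+\K(I-Q)e^{-sH}(I-P_2)e^{sH}\K\right),
\]
does not visibly split into two pieces when you discard any single subexpression. The missing step is exactly Proposition~\ref{prop:matrixkernel}: one first rewrites the scalar determinant as a $2\times 2$ matrix-kernel determinant on $L^2(\rr)^2$ (by reversing the Pr\"ahofer--Spohn/Prolhac--Spohn manipulation, as in \cite{quastelRemAiry1}), and only then the block-diagonal/off-diagonal factorization
\[
I-\begin{pmatrix}A_{11}&A_{12}\\A_{21}&A_{22}\end{pmatrix}
=\left(I-\begin{pmatrix}A_{11}&0\\0&A_{22}\end{pmatrix}\right)
\left(I-\begin{pmatrix}0&(I-A_{11})^{-1}A_{12}\\(I-A_{22})^{-1}A_{21}&0\end{pmatrix}\right)
\]
becomes available; this is Widom's device from \cite{widomAiry2}, and it is what makes the determinant of the first factor equal to $\pp(\aip(x)-x^2\le\beta t^2\ \forall x\le t)\cdot F_{\rm GUE}((t+s)^2+\beta t^2)$ \emph{exactly}. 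Without the $2\times2$ reformulation the ``drop the mixed term'' step has no content, and your proposed use of \eqref{eq:tracecont} to compare the scalar determinant against a product of two scalar determinants is not a legal application of that inequality (the product of two determinants is not $\det(I+B)$ for the operator $B$ you would want). The inequality \eqref{eq:tracecont} is in fact used, but only at the very end, in the form $|\det(I-\widetilde K)-1|\le\|\widetilde K\|_1 e^{1+\|\widetilde K\|_1}$ applied to the off-diagonal factor. Note also that the inverses $(I-A_{11})^{-1}$, $(I-A_{22})^{-1}$ appearing there must be shown to be bounded operators with trace-norm $O(1)$; that is the content of \eqref{eq:R11} and \eqref{eq:R22} and it is not optional.

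\textbf{The exponent heuristic points the wrong way.} Your estimate of the trace norm of the mixed term is based on shifting by $v=(t+s)^2+\beta t^2$ and using Airy decay to produce $e^{-\frac43 v^{3/2}}$. But with $s=\alpha t$ one has $v=[(1+\alpha)^2+\beta]t^2 < (\beta+1)(1+\alpha)^2t^2=(\beta+1)(t+s)^2$ for every $\alpha>0$, so $e^{-\frac43 v^{3/2}} > e^{-\frac43(\beta+1)^{3/2}(t+s)^3}$ --- that is, your bound is \emph{larger} than what the lemma requires (you have the inequality reversed when you write ``$v\ge(\beta+1)(t+s)^2$ only in the limit''). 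The shortfall is made up by a contribution you do not account for: the off-diagonal kernels $R_{1,2}$ and $R_{2,1}$ each carry a propagation factor $e^{\pm s\lambda}$ coming from $e^{\pm sH}\K$, and after the Airy-tail estimate this contributes an extra decay $e^{-s r^2}$ with $r^2=\sigma t^2+2ts+s^2$ (see \eqref{eq:R12} and \eqref{eq:R21}). Only after combining the $\frac23 r^3$, $\frac23\sigma^{3/2}t^3$, and $sr^2$ contributions does the total exponent beat $-\frac43\sigma^{3/2}(t+s)^3$, and verifying the margin requires showing that a specific function $h_\sigma(\alpha)$ satisfies $h_\sigma(0)=0$ and $h_\sigma'(0)>0$ so that $h_\sigma(\alpha)>0$ for small $\alpha$; this is where the constraint on $\alpha_0(\beta)$ actually enters. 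Your proposal flags this as absorption of ``error terms'' but the calculation is not marginal bookkeeping --- without the $e^{-sr^2}$ gain the estimate simply fails.
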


To see how the lower bound follows from this, let $\beta>3$ and choose $\alpha$ as in the
lemma. Then letting $s=\alpha t$ and using the lemma and \eqref{eq:firstbdlow} we get
\begin{multline}
  \pp(|\ct|>t)\geq\pp\!\left(\aip(x)-x^2\leq\beta t^2\,\,\forall\,x\leq t\right)\\\cdot
  \left(1-\pp\!\left(\aip(t+s)-(t+s)^2\leq\beta t^2\right)\big[1
    +\tfrac1{2}a_2(t+s)^{-3}e^{-\frac43(\beta+1)^{3/2}(t+s)^3}\big]\right).
\end{multline}
Now if we let $p_0=\pp\!\left(\aip(x)-x^2\leq0\,\,\forall\,x\in\rr\right)=F_{\rm
  GOE}(0)>0$ then, since $t>0$, the first probability on the right side above is larger than $p_0$. On the other
hand,
\begin{equation}
  \begin{aligned}
  \pp\!\left(\aip(t+s)-(t+s)^2\leq\beta t^2\right)&=F_{\rm
    GUE}((\beta+1)t^2+2st+s^2)\leq F_{\rm
    GUE}((\beta+1)(t+s)^2)\\&
  \leq1-a_2(t+s)^{-3}e^{-\frac43(\beta+1)^{3/2}(t+s)^3}
\end{aligned}
\label{eq:bdGUE}
\end{equation}
for some explicit constant $a_2>0$ and large enough $t$, see for instance
\cite{baikBuckDiF} for the precise bounds on the tails of the GUE distribution. This
implies that
\begin{align}
  \pp(|\ct|>t)&\geq
  p_0\!\left(\tfrac1{2}a_2(t+s)^{-3}e^{-\frac43(\beta+1)^{3/2}(t+s)^3}+\tfrac12a_2^2(t+s)^{-6}e^{-\frac83(\beta+1)^{3/2}(t+s)^3}\right)\\
  &\geq c(1+\alpha)^{-3}t^{-3}e^{-\frac43(1+\beta)^{3/2}(1+\alpha)^3t^3},
\end{align}
and now the lower bound in Theorem \ref{thm:tail} follows from choosing $\beta>3$ and a
small enough $\alpha>0$ so that $\frac43(1+\beta)^{3/2}(1+\alpha)^3<\kappa$.

Our goal then is to prove Lemma \ref{lem:prtbd}. For this we need an expression for the
probability we want to bound. The answer follows from a simple extension of the result in
\cite{qr-airy1to2}, where we obtained an explicit expression for probabilities of the form
\[\pp\!\left(\sup_{x\leq t}(\aip(x)-x^2)\leq m\right)\]
and showed that they to correspond (after a suitable shift) to the one-dimensional
marginals of the Airy$_{2\to1}$ process \cite{bfs}. To state the extension of that
formula, 
define for $a,t\in\rr$ the operators
\[\varrho_{a,t}f(x)=f(2(a+t^2)-x)\qqand M_{a,t}f(x)=e^{2t(x-a- t^2)}f(x)\]
acting on $f\in L^2(\rr)$. We will say that an operator acting on $L^2(\rr)$ is
\emph{identity plus trace class} if it can be written in the form $I+A$ with $A$ a trace
class operator.

\begin{lem}\label{lem:1to2}
  With the above definitions, and for any $a,b\in\rr$ and $s>0$,
  \begin{multline}\label{eq:det1to2}
    \pp\!\left(\aip(x)-x^2\leq a\,\,\forall\,x\leq t,\,\aip(t+s)-(t+s)^2\leq b\right)\\
    =\det\!\left(I-\K+\K(I-M_{a,t}\varrho_{a,t})\bar P_{a+t^2}e^{-sH}\bar P_{b+(t+s)^2}e^{sH}\K\right).
  \end{multline}
  Moreover, the operator inside this determinant is identity plus trace class.
\end{lem}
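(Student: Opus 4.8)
The plan is to adapt the derivation of the marginal formula for $\sup_{x\le t}(\aip(x)-x^2)$ from \cite{qr-airy1to2} by inserting an extra discrete time point at $t+s$, so that the resulting object mixes continuum statistics on $(-\infty,t]$ with a single discrete constraint at $t+s$. Concretely, I would first recall the known formula
\[\pp\!\left(\sup_{x\le t}(\aip(x)-x^2)\le a\right)=\det\!\left(I-\K+\K(I-M_{a,t}\varrho_{a,t})\bar P_{a+t^2}e^{\cdot H}\K\right)\]
(in the form it appears in \cite{qr-airy1to2}, with the operators $\varrho_{a,t}$, $M_{a,t}$ that encode the parabola-shift and the reflection coming from the continuum-statistics boundary value problem \eqref{eq:bdval}), and identify precisely which ingredient in that proof produces the factor $\K(I-M_{a,t}\varrho_{a,t})\bar P_{a+t^2}$. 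That factor is exactly $e^{-tH}\Theta^{g}_{(-\infty,t]}e^{tH}$ with $g(x)=x^2+a$ (after conjugating the parabola away and changing variables), i.e.\ the continuum-statistics solution operator on the half-line. The point-to-line formula \eqref{eq:basic}, combined with the extended-kernel formula \eqref{eq:airyfd}, shows how to append one more point: one replaces the trailing $e^{(r-\ell)H}\K$ by $\bar P_{x_{n+1}}e^{(t_{n+1}-t_n)H}\cdots$ before closing the cycle. Here $t_n=t$, $t_{n+1}=t+s$, the continuum part contributes $\K(I-M_{a,t}\varrho_{a,t})\bar P_{a+t^2}$, the new discrete point contributes $e^{-sH}\bar P_{b+(t+s)^2}e^{sH}$ (the parabola value $(t+s)^2$ enters through the same parabola-removal change of variables that turns $\aip(t+s)-(t+s)^2\le b$ into $\aip(t+s)\le b+(t+s)^2$), and then one closes up with $\K$ using $e^{-sH}\cdots e^{sH}\K$ and the semigroup identity $e^{aH}\K e^{bH}\K=e^{(a+b)H}\K$.

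The cleanest route is probably to work at the level of a discrete approximation: take a fine partition $x_0<\dots<x_n=t$ of an interval $[\ell,t]$, apply \eqref{eq:airyfd} to the $n+2$ points $x_0,\dots,x_n,t+s$ with thresholds $a+x_j^2$ (for $j\le n$) and $b+(t+s)^2$, let $\ell\to-\infty$ and the mesh go to zero, and invoke the convergence of the discrete product $\prod_j \bar P_{a+x_j^2}e^{(x_j-x_{j+1})H}$ to the continuum operator $e^{tH}\Theta^{g}_{(-\infty,t]}$ established in \cite{qr-airy1to2} and \cite{cqr}. Because the extra point $t+s$ sits to the right of the whole partition and is handled by ordinary semigroup factors, it simply rides along untouched through the limit, and one reads off \eqref{eq:det1to2} after applying the cyclic property of the Fredholm determinant and $e^{-sH}\K e^{sH}$-type manipulations to normalize the expression into the stated form. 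I would double-check the algebra relating $\Theta^{g}_{(-\infty,t]}$ (equivalently its reflected/Hilbert-transform representation) to the explicit kernel $\K(I-M_{a,t}\varrho_{a,t})\bar P_{a+t^2}$ by comparing with the $s$-independent specialization, which must reduce exactly to the formula in \cite{qr-airy1to2}.

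For the trace-class claim, the plan is to write the operator inside the determinant as $I+(\text{correction})$ with the correction built from $\K$, bounded projections, the unitary-like factor $\varrho_{a,t}$, the (unbounded but controlled on the relevant range) multiplication $M_{a,t}$, and bounded heat-type factors $e^{\pm sH}$ flanked by projections; then repeatedly apply the norm inequalities \eqref{eq:norms} and the standard estimates on $\K$, $e^{sH}\K$ (from \eqref{eq:eLHK}) and on products like $\bar P_{a+t^2}e^{-sH}\bar P_{b+(t+s)^2}$, exactly as in the trace-class arguments already used in \cite{qr-airy1to2} and in \gref{sec:aiL}. The one genuinely delicate point is the interplay of the exponentially growing multiplier $M_{a,t}$ (which carries a factor $e^{2tx}$) with the projection $\bar P_{a+t^2}$ and the superexponential decay of $\Ai$ and of the heat kernel: one must check that after conjugation the composition $\K(I-M_{a,t}\varrho_{a,t})\bar P_{a+t^2}e^{-sH}\bar P_{b+(t+s)^2}e^{sH}\K$ has an integral kernel decaying fast enough in both variables to be Hilbert--Schmidt (hence, as a product of two such, trace class). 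That bookkeeping — tracking the competition between $e^{2tx}$, $\Ai(x+\cdot)\sim e^{-\frac23 x^{3/2}}$, and the Gaussian tails of $e^{\pm sH}$ on half-lines shifted by $t^2$ and $(t+s)^2$ — is the main obstacle, but it is of the same nature as estimates already carried out in the cited papers, and for the purposes of this lemma only finiteness (not a quantitative rate) is needed.
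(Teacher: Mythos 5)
Your plan is essentially the paper's proof: adapt the derivation of the continuum-statistics formula in \cite{cqr} to include one extra discrete point at $t+s$ (giving the mixed determinant on $[-L,t]\cup\{t+s\}$), use cyclicity and the semigroup identity $e^{(L+t+s)H}\K=e^{sH}\K\,e^{(L+t)H}\K$ to reorganize, then let $L\to\infty$ invoking the Hilbert--Schmidt convergence $e^{(L+t)H}\K\,\Theta_{[-L,t]}\to\K(I-M_{a,t}\varrho_{a,t})\bar P_{a+t^2}$ from \cite{qr-airy1to2}, post-multiplied by $e^{-sH}\bar P_{b+(t+s)^2}N$ to upgrade to trace-class convergence. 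One small imprecision worth flagging: the factor $\K(I-M_{a,t}\varrho_{a,t})\bar P_{a+t^2}$ is not literally $e^{-tH}\Theta^{g}_{(-\infty,t]}e^{tH}$ but rather $\lim_{L\to\infty}e^{(L+t)H}\K\,\Theta_{[-L,t]}$, and the leading $\K$ is essential for the limit to exist.
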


\begin{proof}
  For $L>0$ it is straighforward to adapt the proof given in \cite{cqr} of the continuum
  statistics formula \eqref{eq:basic} to deduce that
  \begin{multline}
    \pp\!\left(\aip(x)-x^2\leq a\,\,\forall\,x\in[-L,t],\,\aip(t+s)-(t+s)^2\leq b\right)\\
    =\det\!\left(I-\K+\Theta_{[-L,t]}e^{-sH}\bar P_{b+(t+s)^2}e^{(L+t+s)H}\K\right),
  \end{multline}
  where $\Theta_{[-L,t]}$ is defined as $\Theta^{g}_{[-L,t]}$ (see \eqref{eq:bdval}) for
  $g(x)=x^2+a$.  Since $e^{(L+t+s)H}\K=e^{sH}\K e^{(L+t)H}\K$ and $\K=e^{sH}\K
  e^{-sH}\K=e^{-sH}\K e^{sH}\K$ (see the remark after \eqref{eq:eLHK}), we can use the
  cyclic property of the determinant to turn the last determinant into
  \begin{equation}
    \det\!\left(I-\K+e^{(L+t)H}\K\Theta_{[-L,t]}e^{-sH}\bar P_{b+(t+s)^2}e^{sH}\K\right).\label{eq:det2}
  \end{equation}
  Rewriting the above operator as
  \begin{equation}
    I-e^{-sH}\K
    P_{b+(t+s)^2}e^{sH}\K-e^{(L+t)H}\K\big(e^{-(L+t)H}-\Theta_{[-L,t]}\big)e^{-sH}\bar
    P_{b+(t+s)^2}e^{sH}\K\label{eq:reop},
  \end{equation}
  it follows from an easy adaptation of the proof of Proposition
  \gref{prop:theta} that the operator is identity plus trace class. On the other hand, in the proof of
  Theorem \otref{thm:1to2} it was shown that
  \[e^{(L+t)H}\K\Theta_{[-L,t]}\xrightarrow[L\to\infty]{}\K(I-M_{a,t}\varrho_{a,t})\bar
  P_{a+t^2}\] in Hilbert-Schmidt norm, and a straightforward extension of the proof
  shows that the same holds if we post-multiply both sides by $e^{-sH}\bar
  P_{b+(t+s)^2}N$, where $Nf(x)=(1+x^2)^{1/2}f(x)$. Thus, since $N^{-1}e^{sH}\K$ is
  Hilbert-Schmidt by \geqref{eq:sndHS} deduce by \eqref{eq:norms} that
  \[e^{(L+t)H}\K\Theta_{[-L,t]}e^{-sH}\bar P_{b+(t+s)^2}e^{sH}\K
  \xrightarrow[L\to\infty]{}\K(I-M_{a,t}\varrho_{a,t})\bar P_{a+t^2}e^{-sH}\bar
  P_{b+(t+s)^2}e^{sH}\K\] in trace class norm. This together with \eqref{eq:det2} and
  \eqref{eq:reop} yields \eqref{eq:det1to2} and, in particular, the fact that the operator inside
  this determinant is identity plus trace class.
\end{proof}

The key to obtain Lemma \ref{lem:prtbd} from Lemma \ref{lem:1to2} is to turn the last
determinant into the determinant of a $2\times2$ matrix kernel. Observe that, since $M_{a,t}$
and $P_{a+t^2}$ commute and $P_{a+t^2}\varrho_{a,t}=\varrho_{a,t}\bar P_{a+t^2}$,
the formula \eqref{eq:det1to2} can be rewritten as
\begin{multline}
\pp\!\left(\aip(x)-x^2\leq a\,\,\forall\,x\leq t,\,\aip(t+s)-(t+s)^2\leq b\right)\\
=\det\!\left(I-\K+\K(I-Q)e^{-sH}(I-P_2)e^{sH}\K\right)\label{eq:altDet}
\end{multline}
where
\begin{equation}
P_1=P_{a+t^2},\qquad P_2=P_{b+(t+s)^2},\qand
Q=P_1(I+M_{a,t}\varrho_{a,t})=P_{a+t^2}+M_{a,t}\varrho_{a,t}\bar P_{a+t^2}.\label{eq:projs}
\end{equation}
Note that $Q^2=Q$ (although $Q$ is not a projection in $L^2(\rr)$, as it is an unbounded
operator). This formula has exactly the same structure as the formula \eqref{eq:airyfd}
for the finite-dimensional distributions of the Airy$_2$ process (in the case $n=2$)
which, we recall, is equivalent to the extended kernel formula \eqref{eq:detform}. 
The equivalence of the two types of formulas was developed by \citet{prahoferSpohn} and
\citet{prolhacSpohn}, and later made rigorous and extended to the Airy$_1$ case by us in
\cite{quastelRemAiry1}. The same method will work for \eqref{eq:altDet}, and the
result is the following:

\begin{prop}\label{prop:matrixkernel}
  With the notation introduced above,
  \begin{multline}\label{eq:matrixdet}
    \pp\!\left(\aip(x)-x^2\leq a\,\,\forall\,x\leq t,\,\aip(t+s)-(t+s)^2\leq b\right)\\
    =\det\!\left(I-\Gamma\!\left[
      \begin{array}{cc}
        Q\K P_1 & Qe^{-sH}(\K-I)P_2\\
        P_2e^{sH}\K P_1 & P_2\K P_2
      \end{array}\right]\Gamma^{-1}\!\right)_{L^2(\rr)^2},
  \end{multline}
  where
  \[\Gamma=\left[\begin{array}{cc}
        G & 0\\
        0 & G
      \end{array}\right]\qquad\text{with}\qquad
    Gf(x)=e^{-2tx}\varphi^{-1}(x)f(x)\qand\varphi(x)=(1+x^2)^{1/2}.\]
    In particular, the operator above is identity plus trace class.
\end{prop}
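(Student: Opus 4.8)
The plan is to run the procedure that converts a path-integral Fredholm determinant into an extended-kernel (block matrix) formula --- developed by \citet{prahoferSpohn} and \citet{prolhacSpohn}, and made rigorous (and extended to the Airy$_1$ case) in \cite{quastelRemAiry1} --- applied to \eqref{eq:altDet}. Two preliminary observations make the structural match with \eqref{eq:airyfd} work. First, although $Q$ from \eqref{eq:projs} is unbounded, it satisfies $Q^2=Q$ together with $P_1Q=Q$ and $QP_1=P_1$; these are immediate from $Q=P_1(I+M_{a,t}\varrho_{a,t})$ and the commutations $M_{a,t}P_1=P_1M_{a,t}$, $P_1\varrho_{a,t}=\varrho_{a,t}\bar P_1$, and they let $Q$ play the algebraic role that the projection $\bar P_{x_0}$ plays in \eqref{eq:airyfd}. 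Second, the conjugation by $\Gamma=\mathrm{diag}(G,G)$ leaves the determinant unchanged but is exactly what makes each of the four blocks in \eqref{eq:matrixdet} separately trace class: the weight $Gf(x)=e^{-2tx}(1+x^2)^{-1/2}f(x)$ absorbs the $e^{2tx}$-growth carried by $M_{a,t}$ inside $Q$ and the polynomial growth from the $e^{\pm sH}$ factors, and one verifies trace-class membership of $GQ\K P_1G^{-1}$, $GQe^{-sH}(\K-I)P_2G^{-1}$, $GP_2e^{sH}\K P_1G^{-1}$ and $GP_2\K P_2G^{-1}$ using the Airy-kernel bounds of \cite{cqr} and estimates of the type \geqref{eq:sndHS}, in the spirit of \eqref{eq:Bmbd}.

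The algebraic core is short. Expanding $(I-P_2)$ in \eqref{eq:altDet} and using $\K^2=\K$ and $e^{aH}\K e^{bH}\K=e^{(a+b)H}\K$ (the remark after \eqref{eq:eLHK}), the term $\K(I-Q)e^{-sH}e^{sH}\K=\K(I-Q)\K=\K-\K Q\K$ cancels the $-\K$, so \eqref{eq:altDet} equals $\det(I-\K Q\K-\K(I-Q)e^{-sH}P_2e^{sH}\K)$. Writing the operator inside as $I-\mathcal X\mathcal Y$ with $\mathcal X\colon L^2(\rr)^2\to L^2(\rr)$, $\mathcal X(f,g)=\K Qf+\K(I-Q)e^{-sH}P_2g$, and $\mathcal Y\colon L^2(\rr)\to L^2(\rr)^2$, $\mathcal Y f=(\K f,\,e^{sH}\K f)$, Sylvester's identity $\det(I-\mathcal X\mathcal Y)=\det(I-\mathcal Y\mathcal X)$ lifts this to a $2\times2$ matrix-kernel determinant on $L^2(\rr)^2$. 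A further sequence of cyclic rearrangements and relabelings --- moving the diagonal factors $\mathrm{diag}(\K,e^{sH}\K)$ and $\mathrm{diag}(Q,P_2)$ around the trace, replacing the complements $(I-Q)$ and $\bar P_2$ by $Q$ and $P_2$ exactly as in the scalar Pr\"ahofer--Spohn computation, and using $P_1Q=Q$, $QP_1=P_1$, $\K^2=\K$ and $e^{sH}\K e^{-sH}=\K$ --- brings the matrix into the form displayed in \eqref{eq:matrixdet}, with $\Gamma$ carried along for free. That $I$ minus the final operator is identity plus trace class then follows from the block-wise trace-class bounds above, or is inherited directly from Lemma \ref{lem:1to2}.

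The real work, and the main obstacle, is that each of these steps --- the collapse $e^{-sH}e^{sH}\K=\K$, the Sylvester swap, and all the cyclic rearrangements --- is only formal as written, because $e^{\pm sH}$, $M_{a,t}$, $Q$ and $e^{-sH}P_2e^{sH}$ are unbounded and the intermediate products need not be trace class on $L^2(\rr)$. Following \cite{quastelRemAiry1}, the cure is to carry out the entire conversion not on \eqref{eq:altDet} but on the $L$-truncated identity \eqref{eq:det2} from the proof of Lemma \ref{lem:1to2}, i.e. $\det(I-\K+e^{(L+t)H}\K\Theta_{[-L,t]}e^{-sH}\bar P_{b+(t+s)^2}e^{sH}\K)$, where $\Theta_{[-L,t]}$ supplies genuine decay, the semigroup kernels $e^{(L+t)H}\K$ and $e^{sH}\K$ are honestly trace class, and everything in sight is identity plus trace class by the adaptation of Proposition \gref{prop:theta} used there; at that level every swap and rearrangement is legitimate and the $\Gamma$-conjugation keeps each block bounded uniformly in $L$. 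One then lets $L\to\infty$, using the trace-norm convergence already established in the proof of Lemma \ref{lem:1to2} together with its evident $2\times2$ block version, to obtain \eqref{eq:matrixdet}. The only bookkeeping genuinely new relative to \cite{quastelRemAiry1} is tracking the reflection term $M_{a,t}\varrho_{a,t}\bar P_1$ inside $Q$ through the conversion --- which is precisely why the $(1,1)$ block comes out as $Q\K P_1$ rather than $P_1\K P_1$ --- and this is handled throughout by the identities $P_1Q=Q$ and $QP_1=P_1$.
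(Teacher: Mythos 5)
Your proposal is correct and uses essentially the same method as the paper: both rest on the Sylvester/cyclic-trace identity to lift the scalar determinant \eqref{eq:altDet} to a $2\times2$ block form, on $\K^2=\K$ and $e^{aH}\K e^{bH}\K=e^{(a+b)H}\K$, on the algebra $Q^2=Q$, $P_1Q=Q$, $QP_1=P_1$ so that $Q$ can play the role of the projection $\bar P_{x_0}$ in \eqref{eq:airyfd}, and on the $\Gamma$-conjugation to make the four blocks separately trace class. The only differences are cosmetic: you run the conversion forward (starting from \eqref{eq:altDet} and building up to \eqref{eq:matrixdet}) while the paper's written proof runs it backward by factoring the block matrix, first extracting an upper-triangular factor with unit determinant and then a rank-one block factorization that collapses to the scalar formula via cyclicity; and for rigor you propose redoing the whole conversion at the $L$-truncated level of \eqref{eq:det2} and passing to the limit, whereas the paper instead verifies directly that each intermediate operator at the limiting level is identity plus trace class (citing Lemma \ref{lem:1to2} and Lemma \ref{lem:kernelEst}). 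Both rigor routes are legitimate; yours is more labor but arguably safer, the paper's is lighter but defers the trace-class checks to the cited lemmas.
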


The conjugation by $\Gamma$ is needed to make the operator trace class. Note that there is
a slight difference between this formula and the ones for the Airy$_1$ and Airy$_2$
processes: the formula is not written in the most symmetric way, as the first column in
the brackets is post-multiplied by $P_1$ instead of $Q$. Formally there is no difference,
because since $Q^2=Q$ one can pre-multiply the whole matrix by $\left[\begin{smallmatrix}
    Q & 0\\ 0 & Q\end{smallmatrix}\right]$ and then use the cyclic property of the
determinant to turn this into a post-factor of $P_1Q=Q$ for the first column. But this
form of the formula will turn out to be better for obtaining the desired bounds in Lemma
\ref{lem:kernelEst}.

The proof of Proposition \ref{prop:matrixkernel} follows the steps
of the proofs in \cite{prolhacSpohn,quastelRemAiry1}, but given the slight
difference noted above, and since it is short and easy to present in the two-dimensional
case, we include the details.

\begin{proof}[Proof of Proposition \ref{prop:matrixkernel}]
  We remark that all the manipulations performed on Fredholm determinants below rely on
  knowing that the operators inside each of them are identity plus trace class (see the
  proof of Theorem \ref{a1-thm:airy1} of \cite{quastelRemAiry1} for more details), but
  this can be seen in each case from Lemma \ref{lem:1to2} and Lemma \ref{lem:kernelEst}
  and similar estimates.

  Let $D$ denote the determinant in \eqref{eq:matrixdet}. The operator inside it can be
  factored as
  \begin{align}
    &\left[
      \begin{array}{cc}
        I & GQe^{-sH}P_2G^{-1}\\
        0 & I
      \end{array}\right]\\
    &\cdot\!\left(I-\Gamma\!\left[
        \begin{array}{cc}
          Q\K P_1-Qe^{-sH}P_2e^{sH}\K P_1 & Qe^{-sH}\K P_2-Qe^{-sH}P_2\K P_2\\
          P_2e^{sH}\K P_1 & P_2\K P_2
        \end{array}\right]\Gamma^{-1}\!\right).
  \end{align}
  Note that the determinant of the first matrix on the right side above is 1, so $D$ equals
  \begin{equation}
    \begin{multlined}
      \det\!\left(I-\Gamma\!\left[
          \begin{array}{cc}
            Q\K P_1-Qe^{-sH}P_2e^{sH}\K P_1 & Qe^{-sH}\K P_2-Qe^{-sH}P_2\K P_2\\
            P_2e^{sH}\K P_1 & P_2\K P_2
          \end{array}\right]\!\Gamma^{-1}\right)_{L^2(\rr)^2}\\
      =\det\!\left(I-\Gamma\!\left[
          \begin{array}{cc}
            Q\K-Qe^{-sH}P_2e^{sH}\K & 0\\
            P_2e^{sH}\K & 0
          \end{array}\right]
        \left[
          \begin{array}{cc}
            P_1 & e^{-sH}P_2\\
            0 & 0
          \end{array}\right]\!\Gamma^{-1}\right)_{L^2(\rr)^2}.
    \end{multlined}
  \end{equation}
  Since $\K$ is a projection we may pre-multiply each entry in the second bracket by $\K$
  and then use the cyclic property of the determinant to get
    \begin{align}
      D&=\det\!\left(I-\left[
      \begin{array}{cc}
        \K P_1 & \K e^{-sH}P_2\\
        0& 0
      \end{array}\right]\left[
        \begin{array}{cc}
          Q\K-Qe^{-sH}P_2e^{sH}\K & 0\\
          P_2e^{sH}\K & 0
        \end{array}\right]\right)_{L^2(\rr)^2}\\
      &=\det\!\left(I
      -\left[
      \begin{array}{cc}
        \K Q\K-\K Qe^{-sH}P_2e^{sH}\K+e^{-sH}\K P_2e^{sH}\K & 0\\
        0& 0
      \end{array}\right]\right)_{L^2(\rr)^2}\\
      &=\det\!\left(I-\K Q\K+\K Qe^{-sH}P_2e^{sH}\K-e^{-sH}\K P_2e^{sH}\K\right)_{L^2(\rr)}.
    \end{align}
    This last determinant equals the one on the right side of \eqref{eq:altDet}, and the
    result follows.
\end{proof}

The $2\times2$ matrix kernel formula is useful because it will allow us to extract easily
the first two factors in the bound given in Lemma \ref{lem:prtbd}. This idea was
introduced by \citet{widomAiry2}, where he studied the asymptotics in $t$ of
$\pp(\aip(0)\leq s_1,\,\aip(t)\leq s_2)$.

\begin{proof}[Proof of Lemma \ref{lem:prtbd}]
  We start with the formula in Proposition \ref{prop:matrixkernel}, with $a=b=\beta t^2$,
  and use the idea introduced in \cite{widomAiry2}: factor out the two diagonal terms in
  the determinant and then estimate the remainder. More precisely, we write
  \begin{equation}
  \begin{aligned}
   I&-\Gamma\!\left[
      \begin{array}{cc}
        Q\K P_1 & Qe^{-sH}(\K-I)P_2\\
        P_2e^{sH}\K P_1 & P_2\K P_2
      \end{array}\right]\!\Gamma^{-1}
    =\left(I-\Gamma\!\left[
      \begin{array}{cc}
        Q\K P_1 &0\\
        0 & P_2\K P_2
      \end{array}\right]\!\Gamma^{-1}\!\right)\\
  &\!\!\cdot\left(I-\Gamma\!\left[
      \begin{array}{cc}
       0 & (I-Q\K P_1)^{-1}Qe^{-sH}(\K-I)P_2\\
        (I-P_2\K P_2)^{-1}P_2e^{sH}\K P_1 & 0
      \end{array}\right]\!\Gamma^{-1}\!\right)
    \end{aligned}
  \end{equation}
  and then recognize that the determinant of the first factor on the right side equals
  \begin{multline}
    \det(I-GQ\K P_1G^{-1})\det(I-GP_2\K P_2G^{-1})\\
    =\pp\!\left(\aip(x)-x^2\leq0\,\,\forall\,x\leq
      t\right)\pp\!\left(\aip(t+s)-(t+s)^2\leq0\right).
  \end{multline}
  To get the last equality, observe first that $\det(I-GP_2\K P_2G^{-1})=F_{\rm GUE}((t+s)^2)$
  which is the second factor on the right side. For the first one we note that, by the
  cyclic property of determinants and the facts that $\K^2=\K$ and $P_1Q=Q$,
  \[\det(I-GQ\K P_1G^{-1})=\det(I-\K Q\K)=\pp\!\left(\aip(x)-x^2\leq0\,\,\forall\,x\leq
      t\right)\]
    by \oteqref{eq:limL}. So we are left with estimating
    \[\det\!\left(I-\Gamma\!\left[
      \begin{array}{cc}
       0 & (I-Q\K P_1)^{-1}Qe^{-sH}(\K-I)P_2\\
        (I-P_2\K P_2)^{-1}P_2e^{sH}\K P_1 & 0
      \end{array}\right]\!\Gamma^{-1}\right)_{L^2(\rr)^2}.\]

  The last determinant equals $\det(I-\widetilde K)$,
  with $\widetilde K=R_{1,1}R_{1,2}R_{2,2}R_{2,1}$ and
  \begin{equation}\label{eq:Rs}
    \begin{alignedat}{2}
      R_{1,1}&=G(I- Q\K P_1)^{-1}G^{-1},\qquad\qquad&&R_{1,2}=GQe^{-sH}(\K-I)P_2\\
      R_{2,2}&=(I-P_2\K P_2)^{-1},&& R_{2,1}=P_2e^{sH}\K P_1G^{-1}.
    \end{alignedat}
  \end{equation}
  Now $|\!\det(I-\widetilde K)-\det(I)|\leq\|\widetilde K\|_1e^{1+\|\widetilde
    K\|_1}\leq\|\widetilde K\|_1e^2$ for $\|\widetilde K\|_1\leq1$, so the proof will be
  complete once we show that
  \begin{equation}
    \label{eq:tildeK}
    \|\widetilde K\|_1\leq\tfrac1{2}e^{-2}a_2(t+s)^{-3}e^{-\frac43(\beta+1)^{3/2}(t+s)^3}.
  \end{equation}
  To get this estimate we use \eqref{eq:norms} to write
  \[\|\widetilde K\|_1\leq\|R_{1,1}\|_1\|R_{1,2}\|_1\|R_{2,2}\|_1\|R_{2,1}\|_1\]
  and use Lemma \ref{lem:kernelEst}, which gives, writing $\sigma=1+\beta$,
  \[\|\widetilde K\|_1\leq cs^{-3/2}t^{1/2}(t^2+s^2)^{3/2}e^{-\frac23(\sigma
    t^2+2ts+s^2)^{3/2}-\frac23\sigma^{3/2}t^3-s(\sigma t^2+2ts+s^2)}.\]
  Now taking $s=\alpha t$ we get
  \[\|\widetilde K\|_1\leq
  c\alpha^{-3/2}(1+\alpha^2)^{3/2}t^{2}e^{-\frac43(1+\alpha)^3\sigma^{3/2}t^3-h_\sigma(\alpha)t^3},\] where
  $h_\sigma(\alpha)=\frac23\sigma^{3/2}+\alpha(\sigma+2\alpha+\alpha^2)-\frac23(\sigma+2\alpha+\alpha^2)^{3/2}$.
  Observe that for fixed $\sigma=\beta+1>4$ we have $h_\sigma(0)=0$ and $h'_\sigma(0)>0$,
  which implies that $h_\sigma(\alpha)>0$ for small enough $\alpha$. Therefore
  \eqref{eq:tildeK} holds for small enough $\alpha$ and large enough $t$, and the result
  follows.
\end{proof}

\begin{lem}\label{lem:kernelEst}
  Let $R_{1,1}$, $R_{1,2}$, $R_{2,2}$ and $R_{2,1}$ be defined as in \eqref{eq:Rs}. Then
  there is a $c>0$ such that if $t$ and $s$ are large enough and $\sigma=\beta+1\geq4$,
  \begin{subequations}
    \begin{align}
      \label{eq:R11}&\|R_{1,1}\|_1\leq2,\\
      \label{eq:R12}&\|R_{1,2}\|_1\leq cs^{-1/2}t^{-1}(t^2+s^2)^2e^{-2\sigma  t^3-s(\sigma t^2+2ts+s^2)},\\
      \label{eq:R22}&\|R_{2,2}\|_1\leq2,\\
      \label{eq:R21}&\|R_{2,1}\|_1\leq cs^{-1}t^{3/2}(t^2+s^2)^{-1/2}e^{-\frac23(\sigma
        t^2+2ts+s^2)^{3/2}-\frac23\sigma^{3/2}t^3+2\sigma t^3}.
    \end{align}
  \end{subequations}
\end{lem}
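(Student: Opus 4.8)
The plan is to estimate each of the four trace norms $\|R_{1,1}\|_1$, $\|R_{1,2}\|_1$, $\|R_{2,2}\|_1$ and $\|R_{2,1}\|_1$ separately, using the factorization identities $e^{aH}\K e^{bH}\K=e^{(a+b)H}\K$, the relation \eqref{eq:eLHK}, and the submultiplicativity bounds in \eqref{eq:norms}. The bounds \eqref{eq:R11} and \eqref{eq:R22} for $R_{1,1}=G(I-Q\K P_1)^{-1}G^{-1}$ and $R_{2,2}=(I-P_2\K P_2)^{-1}$ are the ``soft'' ones: $I-P_2\K P_2$ is $I$ minus a projection-localized Airy kernel whose operator norm is bounded away from $1$ uniformly once $b+(t+s)^2$ is large (since the one-point GUE distribution stays bounded away from $0$), so a Neumann series gives $\|R_{2,2}\|_{\rm op}\le 2$ for large $t$; but we actually need the \emph{trace} norm, so the cleaner route is to write $R_{2,2}=I+(I-P_2\K P_2)^{-1}P_2\K P_2$ and bound $\|P_2\K P_2\|_1$ by $ct^{-3/2}e^{-c't^3}\to0$ using \eqref{eq:eLHK} and a Gaussian-type estimate on the Airy kernel near $+\infty$ (the same computation behind \eqref{eq:Bmbd}), giving $\|R_{2,2}\|_1\le 1+o(1)\le 2$. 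For $R_{1,1}$ the same idea applies after conjugation by $G$: $GQ\K P_1G^{-1}$ is still trace class with norm $o(1)$ by Lemma~\ref{lem:1to2} together with the decay of $\bar P_{a+t^2}$-localized kernels, so $\|R_{1,1}\|_1\le 1+o(1)\le 2$; one must be a little careful that conjugating by the unbounded $G$ preserves the trace-class bound, but this is controlled exactly as in the proof of Proposition~\ref{prop:matrixkernel} and Lemma~\ref{lem:1to2}, where $N^{-1}e^{sH}\K$ and its relatives are shown Hilbert--Schmidt.

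The substantive estimates are \eqref{eq:R12} and \eqref{eq:R21}. For $R_{2,1}=P_2e^{sH}\K P_1G^{-1}$, I would split $e^{sH}\K=e^{sH}\K e^{-sH}\cdot e^{sH}\K$ or, more directly, use \eqref{eq:eLHK} to write the kernel of $P_2e^{sH}\K P_1$ as $\int_0^\infty d\lambda\,e^{-s\lambda}\Ai(x+\lambda)\Ai(y+\lambda)$ restricted to $x\ge b+(t+s)^2$, $y\ge a+t^2$; then bound it as a product $P_2 Q_1 Q_2 P_1G^{-1}$ of two Hilbert--Schmidt pieces (inserting $e^{\pm s\lambda/2}$) exactly as in \eqref{eq:A11A12}, and compute the Hilbert--Schmidt norms by the standard Airy asymptotic $\Ai(z)\sim \frac{1}{2\sqrt\pi}z^{-1/4}e^{-\frac23 z^{3/2}}$. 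The shifts $a+t^2=\sigma t^2$ (with $\sigma=\beta+1$, using $a=\beta t^2$) and $b+(t+s)^2=\sigma t^2+2ts+s^2$ are where the two exponential rates $-\frac23\sigma^{3/2}t^3$ and $-\frac23(\sigma t^2+2ts+s^2)^{3/2}$ come from; the factor $e^{+2\sigma t^3}$ in \eqref{eq:R21} is the price of the unbounded weight $G^{-1}$, which contributes $e^{2tx}\varphi(x)$ and hence $e^{2t(a+t^2)}=e^{2\sigma t^3}$ when integrated against the localized kernel, while the polynomial prefactor $s^{-1}t^{3/2}(t^2+s^2)^{-1/2}$ comes from the $\lambda$-integral (giving an $s^{-1}$) and the $z^{-1/4}$ Airy prefactors evaluated at the endpoints of the $x,y$ integrals. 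For $R_{1,2}=GQe^{-sH}(\K-I)P_2$ I would similarly use that $e^{-sH}(\K-I)$ has a manageable kernel --- writing $\K-I=-\bar K$ with $\bar K$ the projection onto the positive Airy eigenspace, $e^{-sH}(I-\K)$ has kernel $-\int_{-\infty}^0 d\lambda\, e^{-s\lambda}\Ai(x+\lambda)\Ai(y+\lambda)$, which near $x,y\to+\infty$ still decays like a product of Airy functions --- and handle the operators $Q=P_1+M_{a,t}\varrho_{a,t}\bar P_{a+t^2}$ and the weight $G$ by the same product-of-Hilbert--Schmidt decomposition; the reflection $\varrho_{a,t}$ and exponential weight $M_{a,t}$ only shift and re-weight arguments by amounts already accounted for in the exponent $-2\sigma t^3-s(\sigma t^2+2ts+s^2)$.

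The main obstacle I expect is bookkeeping: tracking the exact exponential rates and, especially, the polynomial prefactors through the two-piece Hilbert--Schmidt splittings and the $\lambda$-integrals, while simultaneously keeping the unbounded conjugating weights $G,G^{-1}$ (and the weight $M_{a,t}$ inside $Q$) under control so that every intermediate operator is genuinely trace class and not merely formally so. The cleanest way to organize this is to prove a single ``master'' lemma estimating $\|\bar P_c\, e^{\pm sH}\K\,\bar P_{c'}\, w\|_2$ for a polynomially-bounded-times-exponential weight $w$, in terms of $c$, $c'$, $s$ and the weight, and then read off all four bounds by specializing; the Airy asymptotic with its uniform error term, applied on the regions where the localizing projections live, does all the real work, and the resulting $t^3$-scale exponents combine (after setting $s=\alpha t$) to give exactly the rate $-\frac43(1+\alpha)^3\sigma^{3/2}t^3$ needed in \eqref{eq:tildeK}.
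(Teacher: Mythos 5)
Your plan is essentially the paper's proof: Hilbert--Schmidt factorizations via $\K=B_0P_0B_0$ with inserted exponential multipliers and the weights $N$, $N^{-1}$, Neumann series bounds for the diagonal blocks $R_{1,1}$ and $R_{2,2}$, and Laplace-type Airy integral estimates (Lemma~\ref{lem:airyInt} plays exactly the role of your ``master lemma''). One slip worth noting: the kernel of $e^{-sH}(I-\K)$ is $\int_{-\infty}^0 d\lambda\, e^{s\lambda}\Ai(x+\lambda)\Ai(y+\lambda)$ --- your $e^{-s\lambda}$ would make the integral diverge as $\lambda\to-\infty$ --- but the paper's treatment of $R_{1,2}$ (writing $Qe^{-sH}(\K-I)P_2=QB_0\bar P_0e^{s\xi}B_0P_2$) uses precisely the decomposition you intend.
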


The proof of this result is postponed to Section \ref{sec:kernelEst}.
  
\section{Estimates of operator norms}
\label{sec:extras}

We will use below the following well-known estimates for the Airy function (see
(10.4.59-60) in \cite{abrSteg}):
\begin{equation}
  |\!\Ai(x)|\leq C\,e^{-\frac23x^{3/2}}\quad\text{for $x>0$},\qquad
  |\!\Ai(x)|\leq C\,\quad\text{ for $x\leq0$}.\label{eq:airybd}
\end{equation}
We start some with some basic integral estimates involving the
Airy function:

\begin{lem}\label{lem:airyInt}
  There is a $c>0$ such that for any $m>0$ and $\alpha,\alpha',k,t\in\rr$ such that if $\alpha>0$ or
  $m\geq\frac14\alpha^2t^2$ we have, for large enough $t$,
  \begin{gather}
    \int_m^\infty dx\,x^ke^{-\alpha tx}\Ai(x)^2\leq ct^{2k-1}e^{-\alpha
      tm-\frac43m^{3/2}}\\
    \shortintertext{and}
    \int_m^\infty dx\int_0^\infty dy\,x^ke^{-\alpha tx}\Ai(x+y)^2e^{\alpha' y}\leq ct^{2k-1}e^{-\alpha
      tm-\frac43m^{3/2}}
  \end{gather}
\end{lem}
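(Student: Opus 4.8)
The plan is to prove both integral estimates in Lemma~\ref{lem:airyInt} by the same two-step strategy: first reduce to the case $k=0$ via a polynomial-versus-exponential comparison, then bound the resulting Gaussian-type integral using the Airy decay bounds \eqref{eq:airybd} and Laplace's method. For the first inequality, I would start by inserting the bound $|\!\Ai(x)|\leq Ce^{-\frac23x^{3/2}}$ valid for $x>0$ (recall $m>0$, so the whole region of integration lies in $x>0$), so that
\[
\int_m^\infty dx\,x^ke^{-\alpha tx}\Ai(x)^2\leq C\int_m^\infty dx\,x^ke^{-\alpha tx-\frac43x^{3/2}}.
\]
Writing $x=m+u$ with $u\geq0$, one has $x^{3/2}\geq m^{3/2}+\frac32m^{1/2}u$ by convexity, which pulls out the factor $e^{-\frac43m^{3/2}}$ and leaves $\int_0^\infty du\,(m+u)^ke^{-(\alpha t+2m^{1/2})u-\frac43u^{3/2}}e^{-\alpha tm}$. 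The exponent $\frac23x^{3/2}$ also produces, after completing the ``cube'', the extra room needed to absorb the polynomial prefactor $x^k$ and to handle the case where $\alpha<0$ but $m\geq\frac14\alpha^2t^2$: in that regime $\alpha tx+\frac43x^{3/2}\geq \alpha tm+\frac43m^{3/2}+(\text{something nonnegative})$ because $\frac43x^{3/2}$ dominates the possibly-negative linear term once $x\geq m\geq\frac14\alpha^2t^2$ (the threshold is exactly chosen so that $\frac{d}{dx}(\alpha tx+\frac43x^{3/2})=\alpha t+2x^{1/2}\geq0$ at $x=m$). The factor $t^{2k-1}$ then emerges from Laplace's method applied to $\int_0^\infty du\,u^ke^{-\beta u}\,du\asymp \beta^{-k-1}$ with effective rate $\beta\asymp\max\{|\alpha|t, m^{1/2}\}\gtrsim t$ — though one must be a little careful to track that when $\alpha=0$ the decay is governed by $m^{1/2}$ rather than $t$, so really the clean statement is that the integral is $\leq c\,t^{2k-1}e^{-\alpha tm-\frac43m^{3/2}}$ uniformly, using $t^{2k-1}\geq c(|\alpha|t+m^{1/2})^{-k-1}\cdot(\text{stuff})$ — I expect the bookkeeping here, matching the precise power $t^{2k-1}$, to be the main technical fiddle.

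For the second inequality I would first do the $y$-integral. Splitting at $y$ such that $x+y\gtrless0$ is unnecessary since $x\geq m>0$ forces $x+y>0$ throughout, so $\Ai(x+y)^2\leq Ce^{-\frac43(x+y)^{3/2}}$ and
\[
\int_0^\infty dy\,\Ai(x+y)^2e^{\alpha'y}\leq C\int_0^\infty dy\,e^{-\frac43(x+y)^{3/2}+\alpha'y}.
\]
Using $(x+y)^{3/2}\geq x^{3/2}+\frac32x^{1/2}y$ again, the $y$-integral is bounded by $Ce^{-\frac43x^{3/2}}\int_0^\infty dy\,e^{-(2x^{1/2}-\alpha')y}$, which for $x\geq m$ with $t$ (hence $m$, via the hypothesis or via $\alpha>0$ making $m$ irrelevant\,---\,wait, need $x^{1/2}$ large) large enough is $\leq C(x^{1/2})^{-1}e^{-\frac43x^{3/2}}\leq C'e^{-\frac43x^{3/2}}$. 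This reduces the double integral to $C\int_m^\infty dx\,x^ke^{-\alpha tx-\frac43x^{3/2}}$, which is exactly the integral already bounded in the first part; so the second estimate follows from the first. The one subtlety is ensuring $2x^{1/2}-\alpha'>0$ on the region of integration: since $x\geq m$ and we only need ``large enough $t$'', and $m$ is tied to $t$ through the hypothesis $m\geq\frac14\alpha^2t^2$ \emph{only} in the $\alpha\le 0$ case — in the $\alpha>0$ case $m$ could a priori be tiny. Here I would handle small $m$ by noting that if $m\leq1$ the claimed bound $ct^{2k-1}e^{-\alpha tm-\frac43m^{3/2}}$ is, up to constants, just $ct^{2k-1}$, and the integral $\int_m^\infty dx\,x^ke^{-\alpha tx}\Ai(x)^2$ over $x\in(0,1]$ contributes at most $C\int_0^1 dx\,x^k e^{-\alpha t x}\leq C\min\{1,(\alpha t)^{-k-1}\}$ which is $\leq Ct^{2k-1}$ for large $t$ when $2k-1\geq -k-1$, i.e.\ $k\geq0$ — for negative $k$ one instead keeps the $e^{-\frac43m^{3/2}}$ harmless (it is $\geq e^{-4/3}$) and uses $x^k\le m^k$ is false for $k<0$\ldots so in fact I would simply restrict attention to the tail $x\geq\max\{m,1\}$ and note the part $m\leq x\leq 1$ (only relevant if $m<1$) is bounded by $C\int_m^1 x^k\,dx$ which is $O(1)\cdot e^{\frac43m^{3/2}}$ absorbed into constants; a cleaner route is to observe the lemma is only ever applied with the combination $\alpha t$ or $m$ comparable to a power of $t$, but since the statement is self-contained I would present the case split $m\geq 1$ versus $m<1$ explicitly.

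In summary: (1) apply \eqref{eq:airybd} to replace $\Ai^2$ by $e^{-\frac43x^{3/2}}$ on $x>0$; (2) for the double integral, do the $y$-integral first using convexity of $x\mapsto x^{3/2}$ to get a clean exponential decay, reducing to the single integral; (3) for the single integral, substitute $x=m+u$, use convexity to extract $e^{-\alpha tm-\frac43m^{3/2}}$, verify nonnegativity of the remaining exponential rate using the hypothesis ``$\alpha>0$ or $m\geq\frac14\alpha^2t^2$'', and apply Laplace's method to get the $t^{2k-1}$ prefactor; (4) deal separately with the harmless small-$m$ regime. The main obstacle I anticipate is purely the bookkeeping in step~(3): matching the exact power $t^{2k-1}$ (rather than something weaker) requires being precise about whether the effective decay rate is $\alpha t$ or $m^{1/2}$, and these can differ in magnitude, so one needs the uniform lower bound on the rate that makes $t^{2k-1}$ always an upper bound for the Laplace-method output $(\text{rate})^{-k-1}$ times lower-order corrections.
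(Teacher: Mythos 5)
Your strategy matches the paper's: replace $\Ai^2$ using \eqref{eq:airybd}, reduce the double integral to the single one, then extract the exponential factor and apply Laplace's method, with the hypothesis ``$\alpha>0$ or $m\geq\frac14\alpha^2t^2$'' playing exactly the role you identify (ensuring the exponent $\alpha tx+\frac43x^{3/2}$ is nondecreasing past the lower limit of integration, so the maximum sits at the boundary $x=m$). The paper uses the scaling $x\to t^2x$, arriving at $ct^{2(k+1)}\int_{mt^{-2}}^\infty x^ke^{-(\alpha x+\frac43x^{3/2})t^3}\,dx$ and then invoking a Laplace-type lemma from \cite{cqr}; this is equivalent bookkeeping to your substitution $x=m+u$. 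The one place where you genuinely diverge, and where the paper is cleaner, is the $y$-integral. Your tangent-line bound $(x+y)^{3/2}\geq x^{3/2}+\frac32x^{1/2}y$ leaves you with $\int_0^\infty e^{-(2x^{1/2}-\alpha')y}\,dy$, which requires $2x^{1/2}>\alpha'$ on all of $x\geq m$ and so forces the small-$m$ case split you wrestle with. The paper instead uses superadditivity of $x\mapsto x^{3/2}$ on $[0,\infty)$, namely $(x+y)^{3/2}\geq x^{3/2}+y^{3/2}$, giving $\Ai(x+y)\leq c\,e^{-\frac23x^{3/2}-\frac23y^{3/2}}$; then $\int_0^\infty dy\,e^{-\frac43y^{3/2}+\alpha'y}$ is a finite constant (depending only on $\alpha'$, not on $x$ or $m$), and the double integral collapses to the single one with no constraints whatsoever. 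Adopting this bound eliminates the entire small-$m$ discussion. A minor slip worth flagging: after $x=m+u$, the convexity bound $(m+u)^{3/2}\geq m^{3/2}+\frac32m^{1/2}u$ yields the factor $e^{-2m^{1/2}u}$, but it does \emph{not} simultaneously yield the extra $e^{-\frac43u^{3/2}}$ that you retain in your intermediate integral; you cannot apply both the tangent-line and the superadditivity lower bounds at full strength at once. Simply drop the $u^{3/2}$ term — the linear rate $\alpha t+2m^{1/2}\geq0$ guaranteed by the hypothesis is what the Laplace step actually uses.
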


\begin{proof}
  Using \eqref{eq:airybd} the first integral is bounded by
  \[c\int_m^\infty dx\,x^{k}e^{-\alpha tx-\frac43x^{3/2}}=ct^{2(k+1)}\int_{mt^{-2}}^\infty
  dx\,x^{k}e^{-(\alpha x+\frac43x^{3/2})t^3}.\] The exponent is maximized for $x\geq0$ at
  $x=\frac14\alpha^2\leq m$ if $\alpha<0$ and at 0 otherwise, so the first estimate
  follows from a simple application of Laplace's method, see Lemma \gref{lem:laplace}. The
  second integral is bounded in the same way after noting that $\Ai(x+y)\leq
  c\,e^{-\frac23x^{3/2}-\frac23y^{3/2}}$ for $x,y\geq0$.
\end{proof}

\subsection{Estimates used for the upper bound}

\begin{lem}\label{lem:Qs}
    Let $Q_1$ and $Q_2$ be defined as in \eqref{eq:A11A12}. Then there is a $c>0$ such
    that for $m\geq-2t$ and $t\geq1$ we have
    \[\|Q_1\|_2\leq ct^{3/4}\qqand\|Q_2\|_2\leq ct^{3/4}.\]
\end{lem}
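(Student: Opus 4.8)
The plan is to bound the two Hilbert--Schmidt norms directly from the identity $\|A\|_2^2=\int A(x,y)^2$ (the last line of \eqref{eq:norms}), to decouple the resulting double integral by a single change of variables, and then to control the remaining one--dimensional integral using the decay of the Airy function. The one place where I will have to be careful is that the crude bound \eqref{eq:airybd} alone is not strong enough; the oscillatory $|v|^{-1/4}$ decay of $\Ai$ on the negative axis is essential.

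First I would write, with $\ep=t^{-1}$ as in \eqref{eq:A11A12},
\[\|Q_1\|_2^2=\int_0^\infty dx\int_{-\infty}^\infty d\lambda\,\Ai(2^{1/3}x+m+\lambda)^2e^{\ep\lambda}.\]
For each fixed $x$ I substitute $v=2^{1/3}x+m+\lambda$; this factorizes the integral, and since $\int_0^\infty e^{-\ep 2^{1/3}x}\,dx=\tfrac1{2^{1/3}\ep}$ one gets
\[\|Q_1\|_2^2=\frac{e^{-\ep m}}{2^{1/3}\ep}\int_{-\infty}^\infty \Ai(v)^2e^{\ep v}\,dv.\]
Because $m\geq-2t$ and $\ep t=1$ we have $e^{-\ep m}\leq e^2$, while $\ep^{-1}=t$, so the lemma reduces to the claim that $\int_{\rr}\Ai(v)^2e^{\ep v}\,dv\leq c\,\ep^{-1/2}$ for $\ep\leq1$ (note $t\geq1$).

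To prove that estimate I split the integral at $0$. On $[0,\infty)$, \eqref{eq:airybd} and $\ep\leq1$ give $\int_0^\infty\Ai(v)^2e^{\ep v}\,dv\leq C^2\int_0^\infty e^{-\frac43v^{3/2}+v}\,dv$, which is a finite constant. On $(-\infty,0]$ the crude bound $|\Ai|\leq C$ would only yield $\ep^{-1}$ (which is too weak — it produces only $\|Q_1\|_2=O(t)$); instead I would use the sharper asymptotics $|\Ai(-z)|\leq C(1+z)^{-1/4}$ for $z\geq0$, from (10.4.60) in \cite{abrSteg}, to get
\[\int_{-\infty}^0\Ai(v)^2e^{\ep v}\,dv\leq C^2\int_0^\infty(1+z)^{-1/2}e^{-\ep z}\,dz\leq C^2\int_0^\infty z^{-1/2}e^{-\ep z}\,dz=C^2\Gamma(\tfrac12)\,\ep^{-1/2}.\]
Adding the two pieces gives $\int_{\rr}\Ai(v)^2e^{\ep v}\,dv\leq c\,\ep^{-1/2}$, hence $\|Q_1\|_2^2\leq c\,t^{3/2}$, i.e. $\|Q_1\|_2\leq c\,t^{3/4}$.

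Finally, the bound for $Q_2$ follows by the same computation: after the substitution $v=2^{1/3}y+m-\lambda$ one finds that $\|Q_2\|_2^2$ equals exactly the same expression $\frac{e^{-\ep m}}{2^{1/3}\ep}\int_{\rr}\Ai(v)^2e^{\ep v}\,dv$, so $\|Q_2\|_2\leq c\,t^{3/4}$ as well. The computation is otherwise routine; the only genuine obstacle is recognizing that one must invoke the $|v|^{-1/4}$ decay of $\Ai$ in the oscillatory regime, since this is precisely what turns the naive $O(t)$ into the sharp $O(t^{3/4})$ needed to produce the $O(t^{3/2})$ estimate \eqref{eq:Bmbd}.
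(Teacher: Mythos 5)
Your argument is correct and essentially identical to the paper's: the same change of variables factorizes the double integral, the same use of $m\geq -2t$ controls $e^{-\ep m}$, and the same split of $\int_{\rr}\Ai(v)^2e^{\ep v}\,dv$ at the origin — with \eqref{eq:airybd} on the right and the $|v|^{-1/4}$ decay from (10.4.60) of \cite{abrSteg} on the left — produces the $\ep^{-1/2}$ factor. You have merely spelled out the one-dimensional Airy integral estimate that the paper states in a single line.
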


\begin{proof}
  Writing $\tilde x=2^{1/3}x$ we have
  \begin{align}
    \|Q_1\|^2_2&=\int_0^\infty dx\int_{-\infty}^\infty d\lambda\,e^{\ep\lambda}\Ai(\tilde
    x+m+\lambda)^2=\int_0^\infty dx\,e^{-\ep\tilde x-\ep m}\int_{-\infty}^\infty
    d\lambda\,e^{\ep\lambda}\Ai(\lambda)^2\\
    &\leq ct\int_{-\infty}^\infty
    d\lambda\,e^{\lambda/t}\Ai(\lambda)^2
  \end{align}
  by our assumption $m\geq-2t$ and the facts that $\ep=t^{-1}$ and $t\geq1$. Using the
  estimate $|\!\Ai(\lambda)|\leq c|\lambda|^{-1/4}$ as $\lambda\to-\infty$ (see (10.4.60)
  in \cite{abrSteg}) and \eqref{eq:airybd} we deduce that $\|Q_1\|^2_2\leq ct^{3/2}$. The
  bound for $Q_2$ is proved in exactly the same way.
\end{proof}

\begin{lem}\label{lem:psi2bd}
  For large enough $t>0$ and $s>t$ we have
  \begin{gather}
    \int_{-2t}^\infty dm\,\|P_0\psi_{s,m}(2^{1/3}\cdot)\|^2_2\leq e^{-\frac43s^3+4st}\\
    \shortintertext{and}
    \int_{-2|t|}^\infty dm\,\|P_0\psi_{-s,m}(2^{1/3}\cdot)\|^2_2\leq e^{-\frac43s^3}\\
  \end{gather}

\end{lem}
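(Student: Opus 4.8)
The plan is to reduce each squared $L^2$ norm to an explicit integral of Airy functions and then estimate it using the asymptotics of $\Ai$ and $\Ai'$, in the spirit of Lemma \ref{lem:airyInt}. Substituting $\tilde x=2^{1/3}x$ and inserting \eqref{eq:phi} gives
\[\|P_0\psi_{\pm s,m}(2^{1/3}\cdot)\|_2^2=4\cdot2^{-1/3}\int_0^\infty e^{\pm2s\tilde x}\big(\pm s\,\Ai(z)+\Ai'(z)\big)^2\,d\tilde x,\qquad z:=\tilde x+m+s^2.\]
Since $s>t$ and $m\ge-2t$, for $t$ large the argument satisfies $z\ge s^2-2t\ge\tfrac12 s^2>0$ throughout, so \eqref{eq:airybd} and the companion estimate $|\Ai'(x)|\le C(1+x)^{1/4}e^{-\frac23 x^{3/2}}$ for $x\ge0$ (see \cite{abrSteg}) give $\big(\pm s\Ai(z)+\Ai'(z)\big)^2\le C(s^2+\sqrt z)\,e^{-\frac43 z^{3/2}}$, which suffices for the $-s$ bound; for the $+s$ bound a more careful estimate is needed.

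In the $-s$ case the weight $e^{-2s\tilde x}\le1$ only helps. Interchanging the $m$- and $\tilde x$-integrals and substituting $z$ for $m$ turns $\int dm\,\|P_0\psi_{-s,m}(2^{1/3}\cdot)\|_2^2$ into $C\int_0^\infty d\tilde x\,e^{-2s\tilde x}\int_{\tilde x+c}^\infty(s^2+\sqrt z)\,e^{-\frac43 z^{3/2}}\,dz$ with $c$ of order $s^2$; using $\tfrac{d}{dz}e^{-\frac43 z^{3/2}}=-2\sqrt z\,e^{-\frac43 z^{3/2}}$ and $z\ge\tfrac12 s^2$ to absorb the polynomial factors, the inner integral is $\le Ce^{-\frac43(\tilde x+c)^{3/2}}$, and the remaining $\tilde x$-integral, whose integrand decreases in $\tilde x$ with slope $\le-2s$ at $0$, is $\le\tfrac Cs e^{-\frac43 c^{3/2}}$. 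This yields the second estimate for $t$ large. (To pin down the constant cleanly one can instead use $\psi_{-s,m}(\tilde x)=2\tfrac{d}{d\tilde x}\big[e^{-s\tilde x}\Ai(\tilde x+m+s^2)\big]$, integrate by parts, use $\Ai''(z)=z\Ai(z)$, and evaluate everything via the primitives $\int\Ai(v)^2\,dv=v\Ai(v)^2-\Ai'(v)^2$ and $\int v\Ai(v)^2\,dv=\tfrac13\big(\Ai(v)\Ai'(v)-v\Ai'(v)^2+v^2\Ai(v)^2\big)$, after which several leading-order terms cancel.)

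In the $+s$ case the growing factor $e^{2s\tilde x}$ produces the extra $e^{4st}$. Doing the $\tilde x$-integral first, on the relevant range $m\le0$ the exponent $2s\tilde x-\tfrac43 z^{3/2}$ is maximized at $\tilde x=-m$, i.e.\ $z=s^2$, with value $-2ms-\tfrac43 s^3$ and second derivative $-1/s$; a Laplace estimate there gives a factor of order $\sqrt s$, and integrating over $m\in[-2t,\infty)$ contributes $\int_{-2t}^\infty e^{-2ms}\,dm=\tfrac1{2s}e^{4st}$, so one gets a bound of the shape $(\text{polynomial in }s)\cdot s^{-1}e^{4st}e^{-\frac43 s^3}$. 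What makes the \emph{constant-free} form $e^{-\frac43 s^3+4st}$ hold for $t$ large is the cancellation $s\Ai(z)+\Ai'(z)\sim\tfrac1{2\sqrt\pi}z^{-1/4}(s-\sqrt z)\,e^{-\frac23 z^{3/2}}$: the leading term vanishes exactly at the Laplace saddle $z=s^2$, which forces the polynomial prefactor to decay in $s$. This last point is the main obstacle — the crude pointwise bound above leaves a prefactor of order $s^{3/2}$ near the saddle, which is too big, so one must carry the first subleading corrections in the expansions of $\Ai$ and $\Ai'$ (equivalently, use the closed-form primitives) to extract the gain.
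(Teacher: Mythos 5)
Your approach is essentially the same as the paper's: substitute $\tilde x=2^{1/3}x$, use the exponential decay of $\Ai$ and $\Ai'$, and estimate the resulting $m$- and $\tilde x$-integrals by Laplace-type arguments. The paper is, however, deliberately cruder than what you are aiming for. It uses only $|\!\Ai(x)|,|\!\Ai'(x)|\le Ce^{-\frac23 x^{3/2}}$ for $x>0$ (no $(1+x)^{1/4}$ refinement), so $\big[s\Ai(z)+\Ai'(z)\big]^2\le c(1+s^2)e^{-\frac43 z^{3/2}}$; it then rescales $x$ and $m$ by $s^2$, shifts $m$ by $2ts^{-2}$, peels off $e^{4ts\sqrt{x+m+1}}$ via $\frac43(x+m+1-2ts^{-2})^{3/2}\ge\frac43(x+m+1)^{3/2}-4ts^{-2}\sqrt{x+m+1}$, and applies Laplace's method at the corner $x=m=0$, ending with a bound of the form $cs^3 e^{-\frac43 s^3+4ts}$. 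That is, the paper's own proof also leaves a polynomial prefactor, and the printed lemma statement is slightly tighter than what the proof actually establishes. This is harmless for the application: after Cauchy--Schwarz over $m$ and $\int_t^\infty ds$, polynomial factors in $s,t$ are absorbed into the $\mathcal O(t^{3/2})$ and $\mathcal O(t^2)$ corrections in the final estimate of Theorem~\ref{thm:tail}. Your observation that $s\Ai(z)+\Ai'(z)\sim\tfrac1{2\sqrt\pi}z^{-1/4}(s-\sqrt z)e^{-\frac23 z^{3/2}}$ vanishes at the saddle $z=s^2$ and would kill the prefactor is correct and goes beyond what the paper does, but since you do not carry it out, as written your argument only establishes the same slightly weaker (polynomial-times-exponential) bound that the paper's proof does.

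A second point worth flagging: your computation for the $-s$ case ends at $\le Ce^{-\frac43(s^2-2t)^{3/2}}$, and since $(s^2-2t)^{3/2}\ge s^3-3ts$ this is only $\le Ce^{-\frac43 s^3+4ts}$, \emph{not} the stated $e^{-\frac43 s^3}$. Integrating over $m\in[-2t,\infty)$ pushes the effective Airy argument down to $s^2-2t$, and the $e^{4ts}$ is then unavoidable from the Taylor expansion of $(s^2-2t)^{3/2}$. So the conclusion that this yields the second displayed inequality is not justified. The discrepancy only changes the coefficient of $t^2$ in the final exponent, where the paper has already disclaimed optimality, but it is a genuine mismatch with the lemma as printed and you should not assert it follows.
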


\begin{proof}
  Since $|\!\Ai'(x)|$ satisfies the same bound \eqref{eq:airybd} as $\Ai(x)$ for $x>0$ (see
  (10.4.61) in \cite{abrSteg}) we have for $s\geq t>2$ and $m\geq-2t$ that
  \begin{align}
    \|P_0\psi_{s,m}(2^{1/3}\cdot)\|^2_2&=\int_0^\infty
    dx\,4e^{2xs}\left[s\Ai(x+m+s^2)+\Ai'(x+m+s^2)\right]^2\\
    &\leq c(1+s^2)\int_0^\infty dx\,e^{2xs-\frac43(x+m+s^2)^{3/2}}.
  \end{align}
  Integrating over $m\geq-2t$ and scaling $m$ and $x$ by $s^2$ we get
  \begin{align}
    \int_{-2t}^\infty dm\,\|P_0\psi_{s,m}(2^{1/3}\cdot)\|^2_2&\leq
    cs^6\int_{-2ts^{-2}}^\infty dm\int_{0}^\infty dx \,e^{2xs^3-\frac43(x+m+1)^{3/2}s^3}\\
    &=cs^6\int_0^\infty dm\int_{0}^\infty dx
    \,e^{2xs^3-\frac43(x+m+1-2ts^{-2})^{3/2}s^3}\\
    &\leq cs^6\int_0^\infty dm\int_{0}^\infty dx
    \,e^{[2x-\frac43(x+m+1)^{3/2}]s^3}e^{4ts\sqrt{x+m+1}},
  \end{align}
  where in the last line we used the inequality
  $\frac43(x+m+1-2ts^{-2})^{3/2}\geq\frac43(x+m+1)^{3/2}-4ts^{-2}\sqrt{x+m+1}$ for $x>0$,
  $m\geq-2t$ and $s\geq t>2$. The term in brackets in the first exponential in the last
  integral is maximized at $x=m=0$,
  and then applying Laplace's method as in the proof of Lemma \ref{lem:airyInt} gives
  \[\int_{0}^\infty dm\,\|P_0\psi_{s,m}(2^{1/3}\cdot)\|^2_2\leq
  cs^{3}e^{-\frac43s^3+4ts}.\] This gives the first bound. The second bound is similar
  (and slightly simpler).
\end{proof}

\begin{lem}\label{lem:tracenormbd}
  Let $\Omega^m_L$ be the operator defined in \geqref{eq:OmegaL} for $m,L>0$ (here we are
  making the dependence on $m$ explicit in the notation). Then there is an $\eta>\frac43$
  satisfying the following: for fixed $L>0$ there is a $c>0$ such that for all $m>0$
  \[\left\|e^{LH}\K\Omega^m_Le^{LH}\K\right\|_1\leq c\,e^{-\eta m^{3/2}}.\]
\end{lem}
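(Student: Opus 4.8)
The plan is to re-run the estimates of \cite{cqr} that gave $e^{LH}\K\Omega^m_Le^{LH}\K\to0$ as $L\to\infty$ for fixed $m$, now fixing $L$ and keeping track of the dependence on $m$. Write $P:=P_{m+L^2}=I-\bar P_{m+L^2}$ for the projection onto $[m+L^2,\infty)$ and $S_L:=R_L-e^{-2LH}$. Since $X-\bar P_{m+L^2}X\bar P_{m+L^2}=PX+\bar P_{m+L^2}XP$ for any operator $X$, the definition of $\Omega^m_L$ (the operator in \geqref{eq:OmegaL}, written out in the proof of Proposition~\ref{prop:supfin}) can be recast as
\[\Omega^m_L=\big(R_L-\bar P_{m+L^2}R_L\bar P_{m+L^2}\big)-\big(e^{-2LH}-\bar P_{m+L^2}e^{-2LH}\bar P_{m+L^2}\big)=PS_L+\bar P_{m+L^2}S_LP,\]
so that
\[e^{LH}\K\,\Omega^m_L\,e^{LH}\K=e^{LH}\K\,P\,S_L\,e^{LH}\K+e^{LH}\K\,\bar P_{m+L^2}\,S_L\,P\,e^{LH}\K.\]
Every term now carries the projection $P$ onto the far half-line $[m+L^2,\infty)$ --- this is the source of the decay in $m$ --- and, by the form of $R_L$ in \geqref{eq:RL}, the operator $S_L$ is built out of $e^{-2LH}$, the reflection $\varrho_{m,L}$ about the point $m+L^2$ and the multiplication $M_{m,L}$ (the ingredients appearing in \eqref{eq:det1to2}); in particular each of the two terms above also involves the reflection $\varrho_{m,L}$.

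I would next expand each term using the explicit form of $R_L$ and the semigroup relations for $e^{sH}\K$ (notably $e^{LH}\K e^{-2LH}e^{LH}\K=\K$), turning $e^{LH}\K\Omega^m_Le^{LH}\K$ into a finite sum of operators, each a product of factors $e^{LH/2}\K$, one copy of $P$ or of $P\varrho_{m,L}=\varrho_{m,L}\bar P_{m+L^2}$, and bounded multiplication and semigroup factors. The delicate point is that neither $\K$ nor $e^{LH}\K$ is trace class (indeed $\|e^{LH}\K\|_2=\infty$, since the Airy function is not square integrable at $-\infty$), so one cannot estimate trace norms by splitting these products with \eqref{eq:norms} directly. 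As in the proof of Proposition~\gref{prop:theta}, one conjugates by the weight $N$, $Nf(x)=(1+x^2)^{1/2}f(x)$, using that $N^{-1}e^{sH}\K$ is Hilbert--Schmidt by \geqref{eq:sndHS}, and keeps the ``free'' term $\K=e^{LH}\K e^{-2LH}e^{LH}\K$ paired with its restricted counterpart $e^{LH}\K\bar P_{m+L^2}e^{-2LH}\bar P_{m+L^2}e^{LH}\K$, so that the otherwise divergent contributions from $x\to-\infty$ cancel before \eqref{eq:norms} is applied. This reduces matters to bounding finitely many weighted Hilbert--Schmidt norms such as $\|N^{-1}e^{LH}\K P\|_2$ and $\|N^{-1}e^{LH}\K\varrho_{m,L}\bar P_{m+L^2}M_{m,L}N^{-1}\|_2$, which by \eqref{eq:eLHK}, the identity \eqref{eq:airyConv} and the Airy bound \eqref{eq:airybd} become integrals of shifted squared Airy functions of exactly the type handled in Lemma~\ref{lem:airyInt}.

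Finally, the exponent. A crude bound on a generic term is $c\,\mathrm{poly}(m)\,e^{-\frac43(m+L^2)^{3/2}}$; this already suffices for the use of the lemma in Proposition~\ref{prop:supfin}, but it is not of the form $c\,e^{-\eta m^{3/2}}$ with a \emph{fixed} $\eta>\frac43$. The improvement comes from the reflection: because $e^{LH}\K$ localizes its arguments near the Airy edge at $0$, the reflection $\varrho_{m,L}$ in $S_L$ couples a bulk variable with one of size $\approx2(m+L^2)$, so the Airy functions in the dominant kernels carry an effective shift of order $2(m+L^2)$; the ensuing integral estimates give rate $\tfrac23\big(2(m+L^2)\big)^{3/2}=\tfrac{4\sqrt2}{3}(m+L^2)^{3/2}\ge\tfrac{4\sqrt2}{3}m^{3/2}$, and any $\eta\in\big(\tfrac43,\tfrac{4\sqrt2}{3}\big)$ then works for large enough $m$, the polynomial prefactor being absorbed. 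I expect the two main obstacles to be: (a) carrying out the non-trace-class bookkeeping term by term, essentially a reprise --- with $m$ made explicit --- of the continuum-statistics machinery of \cite{cqr}; and (b) securing the strict inequality $\eta>\frac43$, which amounts to checking that every surviving term genuinely involves the reflection (equivalently, that \geqref{eq:OmegaL} is the decomposition in which the $e^{-2LH}$-diagonal contributions cancel) and that the crude $e^{-\frac43(m+L^2)^{3/2}}$-bound is nowhere sharp.
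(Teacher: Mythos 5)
Your proposal takes essentially the route the paper indicates, but the published proof here is nothing more than a pointer: it says the result follows by adapting the corresponding lemmas of \cite{cqr} while ``carefully keeping track of the dependence in $m$,'' and leaves every detail to the reader. Your sketch therefore fills in material the paper omits, and the structural points you single out are sound. The algebraic identity $X-\bar P_{m+L^2}X\bar P_{m+L^2}=PX+\bar P_{m+L^2}XP$ with $P=P_{m+L^2}$, and hence $\Omega^m_L=PS_L+\bar P_{m+L^2}S_L P$ with $S_L=R_L-e^{-2LH}$, is correct and does isolate the projection onto $[m+L^2,\infty)$ in every summand, which is indeed the mechanism for decay in $m$. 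The need to conjugate by the weight $N$ (because neither $\K$ nor $e^{LH}\K$ is Hilbert--Schmidt) and to pair the ``free'' $\K=e^{LH}\K e^{-2LH}e^{LH}\K$ with its $\bar P_{m+L^2}$-restricted counterpart so that the divergent bulk contributions cancel before \eqref{eq:norms} is applied is exactly the machinery of \cite{cqr}, and reducing to weighted Hilbert--Schmidt integrals of the kind controlled by Lemma~\ref{lem:airyInt} is the right endgame. The one place I would hedge is the numerical constant $\tfrac{4\sqrt2}{3}$ you assign to the improved rate. Which Hilbert--Schmidt factorization one uses, and whether the projection-decay and the reflection-decay land in the same factor or in separate factors of the bound $\|AB\|_1\leq\|A\|_2\|B\|_2$, determines the exact constant; depending on the split one could just as plausibly see a rate like $\tfrac23\big[(m+L^2)^{3/2}+(2(m+L^2))^{3/2}\big]$ or something else. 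Since the statement only requires \emph{some} $\eta>\tfrac43$, and you have correctly identified two genuine sources of decay (the outer projection $P$ and the reflection $\varrho_{m,L}$ hidden inside $S_L$), this is not a gap in the argument, but I would present $\tfrac{4\sqrt2}{3}$ as a heuristic rather than a verified value until the term-by-term estimates --- the very ones the paper defers to the reader --- are actually carried out.
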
 
  
\begin{proof}
  The proof of this result can be adapted from the proof of Lemmas \grefn{lem:tildeRL2to0}
  and \gref{lem:enie0}. In that result it is only proved that the above norm is finite,
  but one can get the above estimate by carefully keeping track of the dependence in
  $m$. We leave the details to the reader.
\end{proof}

\subsection{Proof of Lemma \ref{lem:kernelEst}}\label{sec:kernelEst}

We will use Lemma \ref{lem:airyInt} repeatedly without reference. The assumption
$\sigma=\beta+1\geq4$ enters crucially as it ensures in each case that the hypothesis of
the lemma holds. Throughout the proof we will assume that $t\geq t_0$, where $t_0>1$
should be taken as large as needed to make the estimates work.

Recall the notation introduced in \eqref{eq:projs}. In the present case we have $a=b=\beta
t^2$, and thus recalling that $\sigma=1+\beta$ and writing $r^2=\sigma t^2+2ts+s^2$,
$M=M_{\beta t^2,t}$ and $\varrho=\varrho_{\beta t^2,t}$ to simplify the notation, we have
\[P_1=P_{\sigma t^2},\qquad P_2=P_{r^2}\qqand Q=P_1(I+M\varrho).\]
We also define the multiplication operator
\[Nf(x)=\varphi(x)^{-1}f(x),\] where, as before, $\varphi(x)=(1+x^2)^{1/2}$. Finally, we
will use repeatedly the decomposition
\[\K=B_0P_0B_0,\]
where, we recall, $B_0(x,y)=\Ai(x+y)$.

Let us start with the first estimate. Since 
$Q=P_1+P_1M\varrho $, we have
\begin{equation}
  \|GQKP_1G^{-1}\|_1\leq\|GP_1B_0P_0\|_2\|P_0B_0P_1G^{-1}\|_2
  +\|GP_1M\varrho B_0P_0N\|_2\|N^{-1}P_0B_0P_1G^{-1}\|_2.\label{eq:firstR11}
\end{equation}
Now
\begin{equation}
  \|GP_1B_0P_0\|^2_2=\int_{\sigma  t^2}^\infty dx\int_0^\infty
  dy\,\varphi(x)^{-2}e^{-4tx}\Ai(x+y)^2
  \leq ct^{-5}e^{-4\sigma  t^3-\frac{4}3\sigma^{3/2}t^3},\label{eq:GP1B0P0}
\end{equation}
while, recalling that $M\varrho f(x)=e^{2t(x-\sigma t^2)}f(2\sigma t^2-x)$, we have
\begin{align}
  \|GP_1M\varrho B_0P_0N\|^2_2 &=\int_{\sigma t^2}^\infty dx\int_0^\infty
  dy\,\varphi(x)^{-2}e^{-4\sigma  t^3}\Ai(2\sigma  t^2-x+y)^2\varphi(y)^{-2}\\
  &\leq e^{-4\sigma t^3}\|\!\Ai\|^2_\infty\|P_1\varphi^{-1}\|^2_2\|\varphi^{-1}\|^2_2\leq
  ct^{-2}e^{-4\sigma t^3}.
\end{align}
Similarly
\[\|P_0B_0P_1G^{-1}\|^2_2=\int_{0}^\infty dx\int_{\sigma t^2}^\infty
dy\,\varphi(y)^{2}e^{4ty}\Ai(x+y)^2 \leq ct^3e^{4\sigma t^3-\frac{4}3\sigma^{3/2}t^3},\]
and one can easily see that the same estimate holds with a possibly larger constant for
$\|N^{-1}P_0B_0P_1G^{-1}\|^2_2$. Putting these estimates together with \eqref{eq:firstR11}
we deduce that
\[\|GQKP_1G^{-1}\|_1\leq cte^{-\frac43\sigma^{3/2}t^3}<\tfrac12\] for large enough $t$, and
then
\begin{equation}
  \|R_{1,1}\|_1\leq\sum_{k\geq0}\|(GQ\K P_1G^{-1})^{k}\|_1
  \leq\sum_{k\geq0}\|GQ\K P_1G^{-1}\|_1^k<2,
\end{equation}
which gives \eqref{eq:R11}.

We turn now to $R_{1,2}$. Since $e^{-sH}(\K-I)$ has integral kernel (in $x,y$) given by
$\int_{-\infty}^0d\lambda\,e^{s\lambda}\Ai(x+\lambda)\Ai(y+\lambda)$ (see \eqref{eq:eLHK}
and the paragraph around it), we may use the decomposition $Qe^{-sH}(\K-I)P_2=QB_0\bar
P_0e^{s\xi}B_0P_2$, where $e^{a\xi}$ is the multiplication operator defined by
$(e^{a\xi}f)(x)=e^{ax}f(x)$, so that
\begin{equation}
  \|R_{1,2}\|_1\leq\|GQB_0\bar P_0 N\|_2\|N^{-1}\bar P_0e^{s\xi}B_0P_2\|_2.\label{eq:firstR12}
\end{equation}
Now
\begin{equation}
  \begin{aligned}
    \|GP_1B_0\bar P_0N\|_2^2 &=\int_{\sigma t^2}^\infty dx\int_{-\infty}^0dy\,
    e^{-4tx}\varphi(x)^{-2}\Ai(x+y)^2\varphi(y)^{-2}\\
    &\leq\|\!\Ai\|^2_\infty\|P_1\varphi^{-1}\|_2^2\|\varphi^{-1}\|_2^2\,e^{-4\sigma t^3} \leq
    ct^{-2}e^{-4\sigma t^3},
  \end{aligned}
\end{equation}
while
\begin{equation}
  \begin{aligned}
    \|GP_1M\varrho B_0\bar P_0N\|_2^2 =\int_{\sigma t^2}^\infty
    dx\int_{-\infty}^0dy\, e^{-4\sigma t^3}\varphi(x)^{-2}\Ai(2\sigma
    t^2-x+y)^2\varphi(y)^{-2} \leq ct^{-2}e^{-4\sigma t^3}
  \end{aligned}
\end{equation}
in a similar way. On the other hand
\begin{align}
  \|N^{-1}\bar P_0e^{s\xi}B_0P_2\|_2^2&=\int_{-\infty}^0dx\int_{r^2}^\infty
  dy\,\varphi(x)^{2}e^{2sx}\Ai(x+y)^2\\
  &=\int_{r^2}^\infty dy\,e^{-2sy}\int_{-\infty}^y
  dx\,(1+(x-y)^{2})e^{2sx}\Ai(x)^2.
\end{align}
We split the $x$ integral into the regions $(-\infty,0]$ and $(0,y]$. On the first one we
can estimate the integral by
\[\|\!\Ai\|^2_\infty\int_{r^2}^\infty dy\,e^{-2sy}\int_{-\infty}^0
  dx\,(1+(x-y)^{2})e^{2sx}\leq cr^4s^{-2}e^{-2sr^2},\]
while on the second one we estimate by
\[c\int_{r^2}^\infty dy\,e^{-2sy}\int_0^y dx\,(1+(x-y)^{2})e^{-\frac43x^{3/2}+2sx}
  \leq cr^4s^{-1}e^{-2sr^2},\]
giving \[\|N^{-1}\bar P_0e^{s\xi}B_0P_2\|_2^2\leq cr^4s^{-1}e^{-2sr^2}.\]
Putting the three bounds in \eqref{eq:firstR12} gives \eqref{eq:R12}.

For $R_{2,2}$ we observe that $\|P_2\K P_2\|_1\leq\|P_2B_0P_0\|_2\|P_0B_0P_2\|_2$, which
can easily be seen to be bounded by $\frac12$ for large enough $t$ by bounds similar to
(and simpler than) those used to prove \eqref{eq:R11}, and thus we get \eqref{eq:R22} in
exactly the same way.

Finally, for $R_{2,1}$ we use a similar decomposition as for $R_{1,2}$: using
\eqref{eq:eLHK} we may write
\[\|P_2e^{sH}\K P_1G^{-1}\|_1\leq\|P_2B_0e^{-s\xi/2}P_0\|_2\|P_0e^{-s\xi/2}B_0P_1G^{-1}\|_2.\]
Now
\[\|P_2B_0e^{-s\xi/2}P_0\|^2_2 =\int_{r^2}^\infty dx\int_0^\infty dy\,
e^{-sy}\Ai(x+y)^2\leq cs^{-1}r^{-1}e^{-\frac43r^3}\] and
\[\|P_0e^{-s\xi/2}B_0P_1G^{-1}\|_2^2= \int_{0}^\infty dx\int_{\sigma t^2}^\infty dy\,
e^{-sx}\Ai(x+y)^2\varphi(y)^2e^{4ty} \leq cs^{-1}t^{3}e^{4\sigma
  t^3-\frac43\sigma^{3/2}t^3},\] which gives \eqref{eq:R21}.

\printbibliography[heading=apa]

\end{document}

